\newtheorem{theorem}{Theorem}
\newtheorem{convention}[theorem]{Convention}
\newtheorem{corollary}[theorem]{Corollary}
\newtheorem{definition}[theorem]{Definition}
\newtheorem{example}[theorem]{Example}
\newtheorem{proposition}[theorem]{Proposition}
\newtheorem{remark}[theorem]{Remark}
\newenvironment{proof}[1][Proof]{\textbf{#1.} }{\ \rule{0.5em}{0.5em}}
\newcommand {\id}{\mathrm{id}}
\renewcommand {\O}{{\cal O}}
\newcommand {\Hom}{\mathrm{Hom}}
\newcommand{\GL}{\mathrm{GL}}
\newcommand{\tr}{\mathrm{tr}}
\newcommand{\C}{\mathbb{C}}
\newcommand{\R}{\mathbb{R}}
\newcommand{\Z}{\mathbb{Z}}
\newcommand{\Q}{\mathbb{Q}}
\newcommand{\gX}{\mathrm{X}}
\newcommand{\Maps}{\mathrm{Maps}}
\newcommand{\cG}{\check{G}}
\newcommand{\cH}{\check{H}}
\newcommand{\cM}{\check{M}}
\newcommand{\cN}{\check{N}}
\newcommand{\calA}{\mathcal{A}}
\newcommand{\ccalA}{\check{\mathcal{A}}}
\newcommand{\calB}{\mathcal{B}}
\newcommand{\ccalB}{\check{\mathcal{B}}}
\newcommand{\rG}{\mathrm{G}}
\newcommand{\setA}{\mathrm{A}}
\newcommand{\setB}{\mathrm{B}}
\newcommand{\Tr}{\mathrm{Tr}}
\begin{document}
\title{Duality for finite Gelfand pairs}
\author{M.V. Movshev\footnote{The author would like to thank ICTP-SAIFR and FAPESP grants 2016/07944-2 and 2016/01343-7 for partial financial support.
}\\
Mathematics Department, Stony Brook University, USA and\\
ICTP South American Institute for Fundamental Research,\\
IFT-UNESP, S\~{a}o Paulo State University, Brazil \\
\texttt{mmovshev@math.sunysb.edu}}
\date{\today}
\maketitle

\begin{abstract}
Let $\rG$ be a split reductive group, $K$ be a non-Archimedean local field, and $O$ be its ring of integers. Satake isomorphism identifies the algebra of compactly supported invariants 
$\C_c[\rG(K)/\rG(O))]^{\rG(O)}$ with a complexification of the algebra of characters of finite-dimensional representations $\O(\rG^L(\C))^{\rG^L(\C)}$ of the Langlands dual group.
In this note we report on the results of the study of analogues of such an isomorphism for finite groups. In our setup we replaced Gelfand pair $\rG(O)\subset \rG(K)$ by a finite pair $H\subset G$. It is convenient to rewrite the character side of the isomorphism as $\O(\rG^L(\C))^{\rG^L(\C)}=\O((\rG^L(\C)\times \rG^L(\C))/\rG^L(\C))^{\rG^L(\C)}$. We replace diagonal  Gelfand pair $\rG^L(\C)\subset \rG^L(\C)\times \rG^L(\C)$ by a dual finite pair $\check{H}\subset \check{G}$ and use Satake isomorphism as a defining property of the duality. In this text we make a preliminary study of such duality and compute a number of nontrivial examples of dual pairs $(H,G)$ and $(\check{H}, \check{G})$. We discuss a possible relation of our constructions to String Topology.
 
\end{abstract}
\tableofcontents

\section{Introduction}
Langlands duality for reductive groups and Satake isomorphism are important concepts that are used in the formulation of Langlands conjectures. The author's motivation for writing this paper is placing these concepts into a more general context with the hope that in the future it might reveal new perspectives on the classical Langlands correspondence.

Let us recall (see \cite{FrenkelLanglands} for an accessible  introduction to Langlands theory) some basic definitions needed to state Satake isomorphism. We write $\rG$ for a split reductive group, $K$ for a non-Archimedean local field, and $O$ for its ring of integers. By following the idea mentioned in the abstract we are going to replace the linear space $\C_c[ \rG(K)/\rG(O))]^{\rG(O)}=\C_c[\rG(O)\backslash \rG(K)/\rG(O))]$ of Hecke operators by $\C[H\backslash G/H]$ with finite $G$ and $H$. As usual, a group in superscript $V^G$ signals for taking $G$-invariants. Similarly the space of complexified algebraic characters $\O[\rG^L(\C)]^{\rG^L(\C)}=\O[\rG^L(\C) \backslash \rG^L(\C)\times \rG^L(\C) /\rG^L(\C)]$\footnote{$\rG^L(\C)$ is embedded diagonally into $\rG^L(\C))\times \rG^L(\C)$} of the Langlands dual group $\rG^L(\C)$ over $\C$ in our setup has a finite analogue $\C[\cH\backslash \cG/\cH]$. In general, we don't assume that $\cG$ splits into a product just like $\rG^L(\C)\times \rG^L(\C)$.

We would like to think about elements of $\C[H \backslash G/H]\subset \C[G]$ as $H$-bi invariant functions on $G$: $f(hgh')=f(g),g\in G, h,h'\in H$. The linear space $\C[H \backslash G/H]$ has two unital algebra structures. The first product $f\cdot g$ is a point-wise multiplication of functions. 
The second is the convolution $f\times g$, which in general is noncommutative. Recall that a pair of finite groups $H\subset G$ is a {\it Gelfand pair} iff $f\times g$ is commutative.
Roughly speaking, we want to study duality on Gelfand pairs 
\begin{equation}\label{E:correspondence}
H\subset G\Leftrightarrow \check H\subset \check G
\end{equation} which interchanges the products 
\begin{equation}\label{E:swap1}
\cdot \Leftrightarrow \times.
\end{equation} We don't expect $\check H\subset \check G$ to exist for an arbitrary Gelfand pair $H\subset G$. Neither do we hope that $\check H\subset \check G$ will be given by a functorial construction. A notable exception is $H=\{1\}$ and $G$ is abelian. Then $\check H=\{1\}$ and $ \check G=\Hom_{\Z}(G,\C^{\times})$.

One of the reasons for studying such an amorphous structure (besides formal resemblance with Satake isomorphism) is a link it provides between combinatorics and algebra. We postpone, for now, the formal definition of duality. Instead we formulate its corollary, which will be proven in due time (see (\ref{E:orbitrep})). To state it we notice that the space $X=G/H$ splits into a disjoint union $X=\bigcup o_i$ of $H$-orbits (in this paper we assume that the map $G\to Aut(X)$ has no kernel). The space of functions $\C[X]$ as a regular $G$-representation decomposes into a direct sum of irreducible multiplicity-one representations $\C[X]=\bigoplus_i T_i$. By $a(H,G),b(H,G)$ we denote the arrays $\{\#o_i\}, \{\dim T_i\}$, respectively. One of the nice features of (\ref{E:correspondence}) is that it swaps the sizes of orbits, which live on combinatorial side of the correspondence, and the dimensions of representations, which belong to algebraic side: 
\begin{equation}\label{E:swaporbrep}
a(H,G)=b(\check H,\check G),b(H,G)=a(\check H, \check G)\text{ and } |X|=|\check X|.
\end{equation}
\paragraph{An illustration}\label{P:illustration}
It is amusing to see that $(H,G)=(\Z_4,S_4)$ and $(\check H,\check G)=(S_3, S_3\times S_3)$ is an example of pairs $(H,G)\not\cong (\check H,\check G)$ which satisfy (\ref{E:swaporbrep}). In fact, it is the simplest example of a non self-dual, noncommutative pairs in duality (\ref{E:correspondence}).

In the second pair, $S_3$ is embedded diagonally into $S_3\times S_3$. The orbits of $S_3$ in $S_3\times S_3/S_3$ coincide with the conjugasy classes in $S_3$. These are \[\{1\}, \{(1,2),(1,3),(2,3)\}, \{(1,2,3),(1,3,2)\}.\] So $a(S_3,S_3\times S_3)=\{1,2,3\}$. The trivial, sign, and two-dimensional representations exhaust the set $Irrep(S_3)$ of irreducible representations of $S_3$. By Peter-Weyl isomorphism 
\begin{equation}\label{E:PeterWeyl}
\C[G]\cong \bigoplus_{T\in Irrep(G)} T\boxtimes \overline{T}.
\end{equation}
$b(S_3,S_3\times S_3)=\{1,1,4\}$.

The group of rotations of a cube in $\R^3$ is isomorphic to $S_4$ (it coincides with the group of permutations of diagonals). 
Let $X$ be the set of faces of the cube. The stabilizer of a face is isomorphic to $\Z_4$. Thus $X\cong S_4/\Z_4$.
Under $\Z_4$ $X$ breaks into a union of two one-point orbits and one four-point orbit. We infer that $a(\Z_4,S_4)=\{1,1,4\}$. Frobenius formula
\[\Hom_H(Res^{G}_H T_{i},\C)\cong \Hom_G(T_{i},Ind_H^{G} \C)=\Hom_G(T_{i},\C[X])=V_i
\]
relates $\Z_4$ invariants in $T_i$ with the multiplicities $V_i$ of $T_i$ in $\C[X]$. As usual, $Ind_H^{G} \C$ stands for the $G$-representation induced from the trivial representation of subgroup $H$, and $Res^{G}_H T$ stands for the restriction of $T$ from $G$ to $H$. Thus all representations $T_i$ that contain the $\Z_4$-invariant vectors are subrepresentations of $\C[X]$.
 Among such is the tautological 3-dimensional representation twisted by $sign(\sigma)$.
In order to construct another subrepresentation of $\C[X]$ we notice that $S_4$ can be mapped onto $S_3$. Geometrically this homomorphism comes from the $S_4$ action on pairs of opposite edges of the tetrahedron. Under this map a generator $e$ of $\Z_4$ maps to a reflection in $S_3$. Because of this, the two-dimensional representation of $S_3$ 
 pulled back to $S_4$ contains a $\Z_4$-invariant vector. 
Constants define a trivial subrepresentation in $\C[X]$. Note that $\dim \C+\dim \C^2+\dim \C^3=\dim \C[X]=6$. We conclude that $b(\Z_4,S_4)=\{1,2,3\}$.
We see that 
\[a(S_3,S_3\times S_3)=b(\Z_4,S_4),\quad b(S_3,S_3\times S_3)=a(\Z_4,S_4).\]
For more examples the reader can consult Section \ref{S:Examples}.

\paragraph{Algebras with two multiplicative structures}
Linear spaces $\C[H\backslash G/H]$ have more structures then multiplications $\cdot$ and $\times$ mentioned above. We are going to enhance $\C[H\backslash G/H]$ by these structures and put the resulting object inside of a certain category. This will let us state in more precise terms what it means to swap the products in formula (\ref{E:swap1}).

We will be writing $X$ for $G/H$. We start with an observation that 
\begin{equation}\label{E:simpleiso}
\text{{\it the space $\C[X\times X]^G$ is isomorphic to $\C[X]^H\cong \C[H\backslash G/H]$ }:}
\end{equation} $r:f(x,x')\to f(x,x_0'),$ with $St(x_0)$ equal to $H$. The two products $\times, \cdot$ on $\C[H\backslash G/H]$ and other structures 
are easier to describe in terms of $\C[X\times X]^G$. This what we are going to do now.

To this end, we start in a greater generality by fixing finite sets $X,Y$, which at the moment have no relation to the pair $(H,G)$. 
$\C[X\times Y]$ is an algebra with respect to the point-wise multiplication of functions
\begin{equation}\label{Def:1}
(f\cdot g)(x,y):= f(x,y) g(x,y)\text{ with the unit } 1_{\cdot}(x,y)=1.
\end{equation}

The convolution product $\C[X\times Y]\times \C[Y\times Z]\to \C[X\times Z]$ 
is given by
\begin{equation}\label{Def:2} 
(f\times g)(x,y):= \sum_{y\in X}f(x,y) g(y,z),\quad f\in \C[X\times Y], g\in \C[Y\times Z].\end{equation}
 $\C[X\times Y]$ is isomorphic to the space of linear maps $\Hom_{\C}(\C[Y],\C[X])$. The isomorphism is defined by the formula
\begin{equation}\label{Def:25}
(f\times h)(x):= \sum_{y\in Y}f(x,y) h(y), \quad h\in \C[Y].
\end{equation}
Under this identification, the convolution becomes a composition of maps. In particular, $\C[X\times X]$ is an algebra $\text{ with unit } 1_{\times}(x,y)=\delta_{xy} x,y\in X$
and $\C[X\times Y]$ is a $\C[X\times X]$, $\C[Y\times Y]$ bimodule. 
$\C[X\times X]$ is equipped with two functionals:
\begin{equation}\label{Def:2cdottimes} 
\tr_{\cdot}(f):= \sum_{(x,y)\in X\times X}f(x,y)\quad \tr_{\times}(f):= \sum_{x\in X}f(x,x)
\end{equation}

 \begin{equation}\label{E:dotcomp}
 (a,b):=\tr_{\cdot}(a\cdot \pi b)= \tr_{\times}(a\times \mu b)
 \end{equation} 
 \begin{equation}\label{E:positive}
 \text{ and is positive-definite: }(a,a)>0, a\neq0.
 \end{equation} 

The units satisfy
\begin{equation}\label{Def:26} 
1_{\cdot} \times1_{\cdot}=|X|1_{\cdot}\quad 1_{\times} \cdot1_{\times}=1_{\times}
\end{equation}
We are going to use to complex anti-linear involutions on $\C[X\times X]$. The first one is the complex conjugation map 
\begin{equation}\label{E:congc}
\pi(f)(x,y)= \bar{f}(x,y).
\end{equation} 
The second is a Hermitian conjugation 
\begin{equation}\label{E:transp}
(\mu f)(y,x):=\bar{f}(x,y).
\end{equation} 
Note that 
\begin{equation}\label{E:commutativity}
\pi\mu=\mu\pi
\end{equation}
and 
 \begin{equation}\label{E:realtrace}
 \begin{split}
& \tr_{\cdot}(\pi a)=\overline{\tr_{\cdot}(a)}, \tr_{\cdot}(\mu a)=\overline{\tr_{\cdot}(a)},\\
& \tr_{\times}(\pi a)=\overline{\tr_{\times}(a)}, \tr_{\times}(\mu a)=\overline{\tr_{\times}(a)}.
 \end{split}
 \end{equation}
Traces are related by the identities
\begin{equation}\label{Def:256} 
\tr_{\cdot}(f)=\tr_{\times}(f\times 1_{\cdot}),\quad \tr_{\times}(f)=\tr_{\cdot}(f\cdot 1_{\times}).
\end{equation}

Suppose now that $X=Y=Z=G/H$ for the finite $H\subset G$. All of the above structures are compatible with the regular left $G$ action on $\C[X\times X]$. This way $\C[X\times X]^G$ inherits the products $\cdot$ and $\times$, the traces $\tr_{\cdot}$ and $\tr_{\times}$, and the involutions $\pi$ and $\mu$.

Our basic objects will be Frobenius algebras with some additional data:
\[\begin{split}
&U:=(\C[X\times X]^G,\cdot,\tr_{\cdot}(1_{\cdot})^{-1}\tr_{\cdot}, \pi_{\cdot}, \mu_{\cdot})\\
& \pi_{\cdot}:=\pi, \mu_{\cdot}:=\mu
\end{split}\] 
 and 
 \[\begin{split}
&W:=(\C[X\times X]^G,\times,\tr_{\times}(1_{\times})^{-1}\tr_{\times}, \pi_{\times}, \mu_{\times})\\
& \pi_{\times}=\mu, \mu_{\times}=\pi
\end{split}
\] 
There is a tautological pairing between linear spaces $U$ and $W$ 
\begin{equation}
\begin{split}
&\langle a, b \rangle:=\tr_{\cdot}(a\cdot \pi(b))
, a\in U, b\in W\\
&\text{which satisfies } \langle a, \mu_{\times}b \rangle= \overline{\langle \pi_{\cdot}a, b \rangle}, \langle a, \pi_{\times}b \rangle= \overline{\langle \mu_{\cdot}a, b \rangle}
\end{split}
\end{equation}

To summarize, we have a construction
\[H,G\Rightarrow \calA(H,G) \]
 where 
 \begin{equation}\label{E:quadruple}
 \calA\text{ is the triple }(U,W,\langle \cdot,\cdot \rangle).
 \end{equation}
 Define a new triple $\ccalA$ as $(W,U,\overline{\langle\cdot,\cdot \rangle})$ by interchanging $U$ and $W$ (c.f. this with the definition from Section 3 in   \cite{Egge}).
 \begin{remark}\label{R:catTdef}
 {\rm
 We want to think about triples $(W,U, \langle\cdot,\cdot \rangle)$ as objects of a category $\mathcal{T}$. In $\mathcal{T}$ we don't insist on 
 $\tr_U$ and $\tr_W$
 being nonzero, nor on $\dim U$ and $\dim W$ being finite. Still we require $ \langle\cdot,\cdot \rangle$ to be a nondegenerate pairing.
The morphisms in $\mathcal{T}$ are homomorphisms (inclusions) of underlying algebras compatible with inner products involutions and traces. $\mathcal{T}$ is a monoidal category with respect to the tensor product of triples.
}\end{remark}
We would like to explore the following problem:
Under what conditions on the pair $(H,G)$ is there a dual pair $(\cH,\cG)$, such that there is an isomorphism in $\mathcal{T}$
\begin{equation}\label{E:Satakegen}
D:\calA(H,G)\cong\ccalA(\cH,\cG),
\end{equation}
and what is the freedom in the choice of $(\cH,\cG)$? 

Notice that if we formally replace $H,G$ in (\ref{E:Satakegen}) by $\rG(O),\rG(K)$ and $\cH,\cG$ by $ \rG^L(\C),\rG^L(\C)\times \rG^L(\C)$ the classical Satake isomorphism identifies $W(H,G)$ and $U(\check{H},\check{G})$. It is part of the structures needed for 
 (\ref{E:Satakegen}) to hold. Of course, in the case of infinite groups one has to be careful about analytic aspects, which will be ignored in our finite setting.
\begin{remark}\label{R:com}{\rm
The algebra $\C[H\backslash G/H]$ acts on $\C[ G/H]$ from the right by endomorphisms of $G$-regular representation in $\C[ G/H]$. This observation is used in the proof of the criteria Lemma 3.9. p.42 \cite{Kirillov}:
$(H,G)$ is a Gelfand pair $\Leftrightarrow$ 
\begin{equation}\label{E:induceddec}
\C[ G/H]=\bigoplus_i T_i\otimes V_i(G/H), T_i\neq T_j, \dim V_i\leq 1.
\end{equation} In this formula $T_i$ stands for irreducible $G$-representations, and $\dim V_i(G/H)$ for their multiplicities.
}
\end{remark}
The most well-known example of a Gelfand pair is $G$ embedded diagonally into $ G\times G$. This follows, for example, from Remark \ref{R:com} and Peter-Weyl theorem \cite{Pontryagin}.

\paragraph{Relation to topology} It turns out that structures similar to $\calA(H,G)$ appear in topology. To see this, we fix a connected (for simplicity) oriented submanifold $N$ in a finite-dimensional compact connected manifold $ M$. Let us define the space of paths
\[L(N,M):=\left\{\gamma:[0,1]\to M|\gamma(0),\gamma(1)\in N\right\}.\]
The first indication that $L(N,M)$ has some relation to previous constructions is that the set of connected components $\pi_0(L(N,M))$ is isomorphic to \\ $i(\pi_1(N))\backslash \pi_1(M)/i(\pi_1(N))$ (see e.g. \cite{RonaldBrown} p.397 Corollary 10.7.6). The map $i$ stands for embedding $N\to M$. In order to simplify the notations we will often omit $i$ and the base point in the formulas. 

The space $\C[\pi_0(L(N,M))]=\C[\pi_1(N)\backslash \pi_1(M)/\pi_1(N)]$ is isomorphic to zero cohomology $H^{0}(L(N,M),\C)$ and zero homology $H_{0}(L(N,M),\C)$. 

These linear spaces are parts of a richer structure. The direct sum $\mathbb{H}^{*}(N,M)=\bigoplus_{i\geq 0} H^{i}(L(N,M),\C)$ is the graded commutative algebra with respect to the $\cup$-product. It has two involutions $\pi(a)=\bar a$. Involution $\mu$ is a composition of complex conjugation $\pi$ and geometric operation $or^*$ that changes orientation of a path $or(\gamma)(t)=\gamma(1-t)$.

Homology groups $\mathbb{H}_{*}(N,M):=\bigoplus_{i\geq 0} H_{i}(L(N,M))$ has an algebra structure \[\mathbb{H}_{i}\otimes \mathbb{H}_{j}\to \mathbb{H}_{i+j-\dim N}\] defined by concatenation of paths (see \cite{Sullivan2},\cite{HingstonandOancea}). 
This product is denoted by $a\bullet b$.

By definition a developable orbifold $M$ is quotient of a manifold $\widetilde{M}$ by the discrete group $\Gamma$ acting properly. In this case $\pi_1(M)$ by definition is equal to $\Gamma$. In our application we choose $\widetilde{N}$ to be a $\Gamma$-invariant submanifold. 

\begin{example}\label{Ex:orb}To make connection with the algebraic discussion we take $\widetilde{M}$ to be a compact simply connected manifold equipped with a finite group $G$ action. For $\widetilde{N}$ we take a $G$-orbit of a point $y\in \widetilde{M}$. Denote by $p:\widetilde{M}\to {M}$ the canonical projection. The submanifold $N\subset M$ is an orbifold point $\{x\}=p(\widetilde{N})$. In this setup $\pi_1(M):=G$ and $\pi_1(N):=St(y)=H$. Now $\widetilde{N}$ is disconnected but the appropriate modifications of the previous constructions go through.

The product on $\mathbb{H}_{*}(N,M)$ extends without troubles to developable orbifolds (see \cite{LupercioUribeaXicotencatl} for $M\subset M\times M$ case). 
The linear space $\mathbb{H}_{0}(\{x\},M)$ with the product $\times$ is the Hecke algebra of the pair of groups $(St(y), G)$. The space $\mathbb{H}^{0}(\{x\},M)$ with $\cup$-product coincides with $(\C[St(y)\backslash G/St(y)],\cdot)$.
\end{example}

Now we would like to assume that $N$ is an even-dimensional manifold. We reduce the grading in algebras $\mathbb{H}_{*}(N,M), \mathbb{H}^{*}(N,M)$ modulo two. This way we get a triple 
\begin{equation}\label{E:toptriple}
\begin{split}
&\mathcal{B}(N,M)=(U, W,\langle\cdot,\cdot\rangle),\\
&U=(\mathbb{H}^{*}(N,M),\cup,\tr=0,\pi_{\cup},\mu_{\cup}), \pi_{\cup}:=\pi,\mu_{\cup}=\mu,\\
&W=(\mathbb{H}_{*}(N,M),\bullet,\tr=0,\pi_{\bullet},\mu_{\bullet}),\pi_{\bullet}=\mu,\mu_{\bullet}=\pi.\\
\end{split}
\end{equation}In the above formulas $\langle\cdot,\cdot\rangle$ stands for the pairing between homology and cohomology. Had $L(N,M)$ been a finite-dimensional manifold, we could have identified homology with cohomology by the Poincare duality pairing. That would have given us a triple $\calA$ similar to the one appearing in the group case. In reality, $L(N,M)$ is a infinite-dimensional space. Thus the triple $\calB$ is a substitute for $\calA$ in infinite dimensions. There is a duality operation acting on a triple $\calB$ which interchange $\mathbb{H}_{*}(N,M)$ and $\mathbb{H}^{*}(N,M)$. Denote the result of such modification by $\ccalB$.

We say that a pair of manifolds $\cN\subset \cM$ is dual to $N\subset M$ if
\[\calB(N, M)\cong \ccalB(\cN, \cM).\]
As in the case with groups, in order for this isomorphism to hold, $\mathbb{H}_{*}(N,M)$ must be a commutative algebra. In this case we will refer to the pair $(N, M)$ as a {\it topological Gelfand pair}.

As in the group case, the diagonal embedding $M\to M\times M$ of orientable $M$ gives an example a topological Gelfand pair. Indeed the space $L(M,M\times M)$ is homeomorphic to the free loop space $L(M)=\Maps(S^1,M)$. (A pair of restrictions of $\gamma\in L(M)$ on semicircles $S^1_+\cup S^1_-=S^1$ gives an element in $L(M,M\times M)$.) The $\times$ product on $\mathbb{H}_{*}(M,M\times M)$ coincides with the $\bullet$ product $H_{*}(L(M))$, which is commutative (see \cite{ChasSullivan} for details).

It is not obvious that the proposed duality is nontrivial. In the following sections we will see that there are plenty of examples of dual group pairs. 

Here is an outline of the paper. The text is divided roughly in two parts: theoretical material (Section \ref{S:Dualityforgrouppairs}) and a collection of examples (Section \ref{S:Examples}). Here is a more detailed breakdown.

In Section \ref{S:Dualityforgrouppairs} we study at some length the general aspects of the duality. Thus in Section \ref{S:Dualityinabstractterms} we isolate features of duality that can be formulated without mentioning Gelfand pairs. In Section \ref{S:matrixorbits} we discuss properties of the duality related to the combinatorics of $G$-action on $G/H$. Section \ref{S:zonal} contains, besides the well-known material about spherical functions, the formulas for idempotents for $\times$-product. The general formula for matrix $C$ is derived in Section \ref{S:matrixC}. Some algebraic and arithmetic properties of the coefficients of matrix $C$ are determined in Section \ref{S:Arithmetic}. Based on this we formulated a necessary condition for existence of the dual pair $\check{H}, \check{G}$ in Section \ref{S:Dualityisomorphism}. Conjectural relation of the Galois group of the splitting field and duality is discussed in Section \ref{S:Galoisgroupandduality}. In Section \ref{S:MapsbetweenGelfandpair} we set up a language for studying (non)uniqueness of the dual pair. Section \ref{S:Examples} is devoted to examples of dual pairs described with various levels of precision. In Sections \ref{S:SN}, \ref{S:wr}, and \ref{S:semidirect} we manually compute the triples $(A,B,C)$. In Section \ref{S:Nonexample} we illustrate how a necessary condition established in previous sections work in the simplest example. Rudiments of computed-aided classification of dual pairs are presented in Section \ref{S:Onclassification}. In Section \ref{S:concluding} we listed some of the open problems in the outlined theory.
 
\paragraph{Acknowledgment}
The author benefited from conversations on the subject of this paper with P. Bressler, P. Deligne, S. Donaldson, O. Gabber, D. Kaledin, M. Kontsevich, A. Kuznetsov, L. Mason, R. Matveyev, A. Mikhailov, A.S. Schwarz, D. Sullivan, L. Takhtajan,  O.Viro. 
The author owes special thanks  to  P.  Terwilliger who has pointed at \cite{Egge}. That work contains much overlap with  Section \ref{S:Dualityinabstractterms}. 
Part of the work on this project was conducted at Mathematical Institute  University of Oxford, ICTP (S\~{a}o Paulo), and IHES. The author would like to thank these institutions for excellent working conditions. 

\section{Duality for group pairs}\label{S:Dualityforgrouppairs}
We will see in this section that a triple $\calA(H,G)$ (\ref{E:quadruple}) can be effectively encoded by a square matrix $C(H,G)$. In terms of this matrix, the duality corresponds to taking the Hermitian adjoint $C(H,G)^{\dagger}$.

\subsection{Duality in abstract terms}\label{S:Dualityinabstractterms}
 
 The goal of this section is to give a more economic description for the triple (\ref{E:quadruple}). For this purpose we take a linear space $Q$ as a prototype for $\C[H\backslash G/H]$.
We assume that $Q$ is equipped with two commutative algebra structures $\cdot$ and $\times$, has two traces $\tr_{\cdot}$ and $\tr_{\times}$, has two commuting (\ref{E:commutativity}) anti-linear involution $\pi,\mu$ compatible with both multiplications. We require that traces are real (\ref{E:realtrace}).
 They also satisfy (\ref{E:dotcomp}),(\ref{E:positive}). Note that equations (\ref{Def:256}) are formal corollaries of (\ref{E:dotcomp}) and (\ref{E:commutativity}).
 
We demand that units satisfy (\ref{Def:26}).
 Denote by $\calA(Q)$ the triple $(U(Q),W(Q),\langle a, b \rangle)$ where 
 \begin{equation}\label{E:triple}
 \begin{split}
 &U(Q):=(Q,\cdot, \tr_{\cdot}(1_{\cdot})^{-1}\tr_{\cdot},\pi,\mu),\\
& W(Q):=(Q,\times, \tr_{\times}(1_{\times})^{-1}\tr_{\times},\mu,\pi),\\
& \langle a, b \rangle=(a,b),a\in U,b\in W.
 \end{split}
 \end{equation}

We fix notations for normalized traces: \[\Tr_U:=\tr_{\cdot}(1_{\cdot})^{-1}\tr_{\cdot}, \Tr_W:=\tr_{\times}(1_{\times})^{-1}\tr_{\times}.\]
\begin{definition}\cite{Egge} {\rm
 The septuple $(Q,\cdot,\times,\tr_{\cdot}, \tr_{\times}, \pi,\mu)$ is called  a character algebra (c.f. Definition 2.1 {\it loc.cit}).
}\end{definition}
\begin{proposition}\label{P:equiv}
 Under above assumption the triple $\calA(Q)$ (\ref{E:triple}) determines a character algebra $(Q,\cdot,\times,\tr_{\cdot}, \tr_{\times}, \pi,\mu)$.
 \end{proposition}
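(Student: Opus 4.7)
The plan is to exhibit an explicit inverse to the assignment $(Q,\cdot,\times,\tr_{\cdot},\tr_{\times},\pi,\mu)\mapsto \calA(Q)$, i.e.\ to extract each piece of the character algebra from the triple $\calA(Q)=(U(Q),W(Q),\langle\cdot,\cdot\rangle)$. Most of the data is manifestly there already: the vector space $Q$ underlies both $U$ and $W$; the multiplications $\cdot$ and $\times$ are part of the algebra structures of $U(Q)$ and $W(Q)$ respectively; and the anti-linear involutions $\pi,\mu$ appear, with roles swapped, in both components. What is not directly visible is the pair of \emph{unnormalized} traces $\tr_{\cdot},\tr_{\times}$, since the triple only stores the normalizations $\Tr_U=\tr_{\cdot}(1_{\cdot})^{-1}\tr_{\cdot}$ and $\Tr_W=\tr_{\times}(1_{\times})^{-1}\tr_{\times}$.

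The key observation is that the pairing $\langle\cdot,\cdot\rangle$ is defined by the \emph{unnormalized} traces, so evaluating it against the units will recover the missing scalars. The multiplicative units $1_{\cdot}$ of $(Q,\cdot)$ and $1_{\times}$ of $(Q,\times)$ are intrinsic to those algebra structures. Moreover, because $\pi$ is anti-linear and compatible with $\cdot$, applying $\pi$ to $1_{\cdot}\cdot a=a$ shows that $\pi(1_{\cdot})$ is a multiplicative identity, hence $\pi(1_{\cdot})=1_{\cdot}$; symmetrically $\mu(1_{\times})=1_{\times}$. Using the two formulas for the pairing in (\ref{E:dotcomp}) this yields
\[
\tr_{\cdot}(a)=\langle a,1_{\cdot}\rangle,\qquad \tr_{\times}(a)=\langle a,1_{\times}\rangle,
\]
so in particular the scalars $\tr_{\cdot}(1_{\cdot})=\langle 1_{\cdot},1_{\cdot}\rangle$ and $\tr_{\times}(1_{\times})=\langle 1_{\times},1_{\times}\rangle$ are read off, and the recovered unnormalized traces are consistent with the normalized ones that the triple stores.

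Finally, one checks that the septuple so reconstructed satisfies the character algebra axioms. Commutativity of $\cdot$ and $\times$, the commutation $\pi\mu=\mu\pi$, reality (\ref{E:realtrace}) and positive-definiteness (\ref{E:positive}), the unit identities (\ref{Def:26}), and the two presentations (\ref{E:dotcomp}) of the pairing are all assumptions imposed directly on the triple at the start of the subsection; the derived identities (\ref{Def:256}) follow automatically, as already noted in the text. In essence the proposition is a bookkeeping statement that the repackaging $\calA(\cdot)$ loses no information, and the only small point beyond pure unpacking is the observation that the involutions fix the respective units --- I do not anticipate any serious obstacle.
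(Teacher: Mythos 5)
Your proposal is correct and takes essentially the same approach as the paper: both reduce the statement to recovering the unnormalized traces (equivalently the constants $\tr_{\cdot}(1_{\cdot})$ and $\tr_{\times}(1_{\times})$) by evaluating the pairing against the units, using that $\pi$ and $\mu$ fix $1_{\cdot}$ and $1_{\times}$. Your formulas $\tr_{\cdot}(a)=\langle a,1_{\cdot}\rangle$ and $\tr_{\times}(a)=\langle a,1_{\times}\rangle$ are a slightly more direct version of the paper's computation, which routes through auxiliary elements $e_U,e_W$ and the constants $K=\langle e_U,1_{\times}\rangle$, $N=\langle 1_{\cdot},1_{\times}\rangle$.
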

 \begin{proof}
 The only difference between $\calA(Q)$ and $(Q,\cdot,\times,\tr_{\cdot}, \tr_{\times}, \pi,\mu)$ is that of normalization of traces in $\calA(Q)$. As soon as we recover the normalization constants $\tr_{\times}(1_{\times})$, $\tr_{\cdot}(1_{\cdot})$ we recover the septuple.
 
From nondegeneracy of $\Tr_U,\Tr_W$ (\ref{E:positive}) we derive existence of elements $e_U\in U,e_W\in W$ such that
\[\langle a, 1_{\times}\rangle =\Tr_U(a\cdot e_U), a\in U,\quad \overline{\langle 1_{\cdot },b\rangle} =\Tr_W(e_W\times b), b\in W.\]
It follows from (\ref{Def:256}) that $e_U=\tr_{\cdot}(1_{\cdot})1_{\times} \in U$ and $e_W=\tr_{\times}(1_{\times})1_{\cdot} \in W$. The $\calA(Q)$-dependent constants $K,N$ are defined by the formulas
\[K:=\langle e_U, 1_{\times} \rangle=\langle 1_{\cdot}, e_W\rangle=\tr_{\cdot}(1_{\cdot})\tr_{\times}(1_{\times}),\] 
\[N:=\langle 1_{\cdot}, 1_{\times} \rangle=\tr_{\times}(1_{\cdot})=\tr_{\times}(1_{\cdot}\times \mu1_{\times})\overset{(\ref{E:dotcomp})}{=}\tr_{\cdot}(1_{\cdot}\cdot \pi1_{\times})=\tr_{\cdot}(1_{\times}).\]
Units $1_{\cdot},1_{\times}$ are obviously invariant with respect to $\pi$ and $\mu$. We have the following (in)equalities:
\[0\overset{(\ref{E:positive})}{<}\tr_{\cdot}(1_{\cdot})=\tr_{\cdot}(1_{\cdot}\cdot \pi1_{\cdot})\overset{(\ref{E:dotcomp})}{=}\tr_{\times}(1_{\cdot}\times \mu 1_{\cdot})\overset{(\ref{Def:26})}{=}|X|\tr_{\times}(1_{\cdot})=|X|N,\]
\[0\overset{(\ref{E:positive})}{<}\tr_{\times}(1_{\times})=\tr_{\times}(1_{\times}\times \mu1_{\times})\overset{(\ref{E:dotcomp})}{=}\tr_{\cdot}(1_{\times}\cdot \pi 1_{\times})\overset{(\ref{Def:26})}{=}\tr_{\cdot}(1_{\times})=N.\]
We conclude that 
\[\tr_{\cdot}(1_{\cdot})=K/N,\quad \tr_{\times}(1_{\times})=N,\quad \tr_{\cdot}(1_{\times})=\tr_{\times}(1_{\cdot})=N,\quad |X|=K/N^2.\]

\end{proof}
\begin{proposition}\label{P:bases}
\begin{enumerate}
\item In a character algebra $E=(Q,\cdot,\times,\tr_{\cdot}, \tr_{\times}, \pi,\mu)$  $(Q,\cdot)$ and $(Q,\times)$ are isomorphic to direct products $\C^{\dim Q}$ of fields. 
 \item We fix an inner product as in (\ref{E:dotcomp}) and two sets $\setA,\setB$, $|\setA|=|\setB|=\dim Q$. Up to an isomorphism $E$ is completely characterized by two real vectors $A\in \C^{\setA},B\in \C^{\setB}$ and a matrix $C$ such that 
\begin{equation}\label{E:Adef}
(\gX_i,{\gX}_j)=A_{i}\delta_{ij},A_i=\tr_{\cdot}\gX_i, A_{i}>0,
\end{equation}
\begin{equation}\label{E:Bdef}
(\Psi_i,{\Psi}_j)=B_{i}\delta_{ij},B_i=\tr_{\cdot}\Psi_i, B_{i}>0,
\end{equation}
\begin{equation}\label{E:Cdef}
(\gX_i,{\Psi}_j)=C_{ij}. 
\end{equation}
$\{\gX_i|i\in \setA\}$ is the basis of minimal idempotents defined with respect to multiplication $\cdot$. $\{\Psi_j | j\in \setB\}$ is the similar basis for $\times$-multiplication. 
There are involutions $\mu:\setA\to\setA$, $\pi:\setB\to\setB$ such that
 \begin{equation}\label{E:invact}
 A_{i}=A_{\mu(i)}, B_{i}=B_{\pi(i)}, C_{i\pi(j)}=\overline{C}_{ij}, C_{\mu(i)j}.=\overline{C}_{ij}
 \end{equation}

The isomorphism of two triples $(A,B,C), (A',B',C')$ is a pair of permutations $\sigma,\tau$ of indices $\setA,\setB$ such that 
$A_{\sigma(i)}=A'_{i}, B_{\tau(j)}=B'_{j}, C_{\sigma(i)\tau(j)}=C'_{ij}, i\in \setA, j\in \setB$. $\sigma$ intertwines $\mu$ and $\mu'$. $\tau$ intertwines $\pi$ and $\pi'$.

The data satisfy 
\begin{equation}\label{E:compatibility}
B_{k}\delta_{kl}=\sum_{i=1}^{\dim Q}A^{-1}_{i}C_{ik} \overline{C}_{il}.
\end{equation}
\item \label{E:idempones} By (\ref{Def:26}) $1_{\cdot}/|X|$ is an idempotent for $\times$-multiplication, $1_{\times}$ - for $\cdot$-multiplication. Thus 
\[1_{\cdot}/|X|=\sum_in_i\Psi_i,\quad 1_{\times}=\sum_im_i\gX_i,n_i,m_i=0,1. \]
Then
\begin{equation}\label{E:cacbrelgen}
A_i=|X|\sum_jC_{ij}n_j,\quad B_j=\sum_jC_{ij}m_i.
\end{equation}
In case
\begin{equation}\label{E:minidemp}
1_{\cdot}/|X|=\gX_1, 1_{\times}=\Psi_1\text{ are minimal idempotents,}
\end{equation} then
\begin{equation}\label{E:cacbrelspec}
\begin{split}
&A_i=|X|C_{i1},\quad B_j=C_{1j}\\
&\tr A=|X|\sum_iC_{i1},\quad \tr B=\sum_iC_{1j}.
\end{split}
\end{equation}
\end{enumerate}
\end{proposition}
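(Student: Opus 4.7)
The overall plan is to exploit the positive-definite inner product (\ref{E:positive}) together with (\ref{E:dotcomp}) to force each of the two commutative algebra structures on $Q$ to be semisimple, then pick bases of primitive idempotents, and finally read off all the data $(A,B,C)$ and their constraints by direct computation.

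For Part 1, a finite-dimensional commutative $\C$-algebra is isomorphic to $\C^{\dim Q}$ exactly when it is reduced. Suppose $(Q,\cdot)$ had a nonzero nilpotent; replacing it by a suitable power one may take $a\neq 0$ with $a\cdot a = 0$. Set $b := a\cdot \pi(a)$; commutativity of $\cdot$ and the fact that $\pi$ is a $\cdot$-homomorphism give $\pi(b)=b$, while $b\cdot b = (a\cdot a)(\pi a\cdot \pi a) = 0$. Nevertheless $(a,a) = \tr_{\cdot}(a\cdot \pi a) = \tr_\cdot(b) > 0$ forces $b \neq 0$, whereas $(b,b) = \tr_{\cdot}(b\cdot b) = 0$ contradicts (\ref{E:positive}). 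The identical argument with $\mu$ in place of $\pi$ and the second form of the inner product handles $(Q,\times)$.

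For Part 2, fix bases of primitive idempotents $\{\gX_i\}_{i\in \setA}$ for $(Q,\cdot)$ and $\{\Psi_j\}_{j\in \setB}$ for $(Q,\times)$. Each conjugate-linear algebra automorphism $\pi, \mu$ permutes these bases. Writing $\pi \gX_i = \gX_{\sigma(i)}$ and computing $(\gX_i,\gX_i) = \tr_{\cdot}(\gX_i\cdot \gX_{\sigma(i)}) = \delta_{\sigma(i),i}\tr_{\cdot}\gX_i$, positive-definiteness forces $\sigma=\id$, so $\pi \gX_i = \gX_i$; set $A_i := \tr_{\cdot}\gX_i > 0$ to obtain (\ref{E:Adef}). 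The symmetric computation with $\tr_{\times}(\Psi_i\times \mu\Psi_j)$ yields $\mu \Psi_j = \Psi_j$ and (\ref{E:Bdef}). The remaining actions $\mu \gX_i = \gX_{\mu(i)}$, $\pi \Psi_j = \Psi_{\pi(j)}$ define involutions of the index sets, and (\ref{E:invact}) follows by routine manipulation: for instance
\[
C_{i,\pi(j)} = \tr_{\cdot}(\gX_i\cdot \Psi_j) = \overline{\tr_{\cdot}(\gX_i\cdot \pi\Psi_j)} = \overline{C_{ij}},
\]
using $\pi\gX_i = \gX_i$ and (\ref{E:realtrace}), while $A_{\mu(i)}=\overline{A_i}=A_i$ by reality. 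To obtain the compatibility (\ref{E:compatibility}), expand $\Psi_k = \sum_i a_{ik}\gX_i$; pairing with $\gX_i$ gives $C_{ik} = \overline{a_{ik}}A_i$, so $a_{ik} = A_i^{-1}\overline{C_{ik}}$, hence
\[
B_k\delta_{kl} = (\Psi_k,\Psi_l) = \sum_i a_{ik}\overline{a_{il}}A_i = \sum_i A_i^{-1}\overline{C_{ik}}C_{il},
\]
which, after complex conjugation, is (\ref{E:compatibility}). The classification up to isomorphism is then formal: any morphism of character algebras must match primitive idempotents, traces, inner products, and involutions, so the triple $(A,B,C)$ together with $\mu|_{\setA}$ and $\pi|_{\setB}$ is a complete invariant, and conversely any bijection of indices matching this data extends uniquely to an algebra isomorphism.

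For Part 3, (\ref{Def:26}) shows $1_{\cdot}/|X|$ is a $\times$-idempotent and $1_{\times}$ a $\cdot$-idempotent; since every idempotent in $\C^n$ is a $0/1$-combination of the primitive idempotents, the expansions with $n_j,m_i\in \{0,1\}$ exist. Then
\[
A_i = \tr_{\cdot}(\gX_i\cdot 1_{\cdot}) = |X|\sum_j n_j\,\tr_{\cdot}(\gX_i\cdot \Psi_j) = |X|\sum_j n_j\,\overline{C_{ij}},
\]
and reality of $A_i$ together with $n_j\in\{0,1\}$ gives the first identity of (\ref{E:cacbrelgen}); the formula for $B_j$ is analogous, using $C_{ij} = \tr_{\times}(\gX_i\times \Psi_j)$ from the second presentation of the inner product and commutativity of $\times$. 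The specialization (\ref{E:cacbrelspec}) is immediate. The main delicacy throughout is keeping straight which of $\pi,\mu$ fixes which of the two idempotent bases, and this is pinned down completely by matching the two expressions for the inner product in (\ref{E:dotcomp}) against positive-definiteness.
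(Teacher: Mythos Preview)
Your proof is correct and follows essentially the same route as the paper's: positivity of the inner product forces semisimplicity, the primitive idempotent bases and their behavior under $\pi,\mu$ are pinned down exactly as in the paper, and the compatibility relation (\ref{E:compatibility}) is derived by the same expansion $\Psi_j=\sum_s d_{sj}\gX_s$. The only cosmetic differences are that in Part~1 you use a direct square-zero nilpotent argument where the paper argues via the radical filtration, and in Part~3 you substitute $1_\cdot=|X|\sum_j n_j\Psi_j$ directly into $\tr_\cdot$ (picking up a harmless $\overline{C_{ij}}$ that you then remove by reality), whereas the paper first passes to $\tr_\times$ via (\ref{Def:256}) and uses $\mu\Psi_j=\Psi_j$ to land on $C_{ij}$ without conjugation.
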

\begin{proof}

Algebras $(Q,\cdot)$ and $(Q,\times)$ are semisimple. Indeed both $\pi,\mu$ permute maximal ideals $\{\mathfrak{m}\}$ of $(Q,\cdot)$ and act on their intersection $\mathfrak{rad}_{\cdot}=\bigcap \mathfrak{m}$. They also preserve filtration $\mathfrak{rad}_{\cdot}\supset \mathfrak{rad}_{\cdot}^2\supset\cdots $. Let $\mathfrak{rad}_{\cdot}^k$ be the last nonzero term of this filtration (we use that $\mathfrak{rad}_{\cdot}$ is nilpotent). Then $(a,a)=\tr_{\cdot}(a\cdot \pi a)=0\Rightarrow \mathfrak{rad}_{\cdot}=\{0\}$. Thus $(Q,\cdot)$ is a direct sum of fields. $\pi$ permutes minimal idempotents $\{\gX_i|i\in \setA\}$. We have $0<\tr_{\cdot}(\gX_i\cdot \pi \gX_i)$. From this we conclude that $\pi$ acts trivially on $\{\gX_i\}$.

We can prove the same way that $(Q,\times)$ is semisimple and $\mu$ acts trivially on the set of minimal idempotents $\{\Psi_i|i\in \setB\}$.
We have $\tr_{\cdot}(\gX_i\cdot \pi\gX_j)=\tr_{\cdot}(\gX_i\cdot \gX_j)=\tr_{\cdot}(\gX_i)\delta_{ij}=\delta_{ij}A_i$. From positivity of $(\cdot,\cdot)$ we conclude that $\{A_i\}$ are positive reals. The proof that $\mu\Psi_i=\Psi_i$ and $B_i>0$ is similar.

Operator $\mu$ defines an involution on $\setA$: $\gX_{\mu (i)}:=\mu\gX_i$. The same way $\pi$ defines an involution on $\setB$: $\Psi_{\pi (i)}:=\pi\Psi_i$. By abuse of notation we denote the involution of the set of idempotents by the symbol of the automorphism it induces.

Let us expand $\Psi_i$ in the basis of $\gX_i$:
\begin{equation}\label{E:psidef12}
\Psi_j=\sum_{s=1}^k d_{sj}\gX_s.
\end{equation}
Then 
\begin{equation}\label{E:DCmatdef}
C_{ij}=(\gX_i,\Psi_j)=\overline{d}_{ij}A_{i}.
\end{equation}
Equations (\ref{E:Bdef}) and (\ref{E:psidef12}) imply that 
\[B_{i}\delta_{ij}=(\Psi_i,{\Psi}_j)=
\sum_{s=1}^kd_{si}\overline{d}_{sj}A_s
=\sum_{s=1}^kA^{-1}_s\overline{C}_{si}C_{sj}.
\]
It is equivalent to the statement that $(C^{-1})_{ij}=B^{-1}_i\overline{C}_{ji}A^{-1}_j$, which implies that
\begin{equation}\label{E:secondeq}
A_{i}\delta_{ij}=\sum_{s=1}^kB^{-1}_s C_{is}\overline{C}_{js}.
\end{equation}

We leave to the reader the proof of 
 (\ref{E:invact}) which uses $\pi\mu=\mu\pi$ and (\ref{E:realtrace}). 

The triple $(A,B,C)$ is sufficient to recover two algebra structures up to an isomorphism. For this purpose we choose the standard basis $\{\gX_i\}$ for the vector space $\C^{\dim Q}$. We declare it a basis of idempotents for $\cdot$ -multiplication, $\tr_{\cdot}(\gX_i):=A_i$, $\mu\gX_i:=\gX_{\mu (i)}$, $\pi\gX_i:=\gX_{i}$. $\pi$ and $\mu$ obviously commute. Positivity of $\tr_{\cdot}(a\cdot\pi b)$ follows from (\ref{E:Adef}).

 We define the basis $\{\Psi_i\}$ of idempotents for $\times$ multiplication by the formula (\ref{E:psidef12}), where the matrix $d$ is extracted from (\ref{E:DCmatdef}). We also define $\tr_{\times}$ by $\tr_{\times}(\Psi_i):=B_i$. The formula (\ref{E:dotcomp}) would follow from (\ref{E:secondeq}).

Equations (\ref{E:cacbrelgen}),(\ref{E:cacbrelspec})
 follow from
\[A_i=\tr_{\cdot}\gX_i\overset{(\ref{Def:256})}{=}|X|\tr_{\times}(\gX_i\times 1_{\cdot}/|X|)=|X|\left(\gX_i,\sum_jn_j\Psi_j\right)\]
and
\[B_j=\tr_{\times}\Psi_j\overset{(\ref{Def:256})}{=}\tr_{\cdot}(1_{\times}\cdot \Psi_j)=\left(\sum_im_i\gX_i,\Psi_j\right).\]

\end{proof}
\begin{remark}\label{E:tensorproduct}
{\rm
Tensor product in monoidal category $\mathcal{T}$ transforms into tensor product of matrices $C,C'\to C\otimes C'$ and direct product of involutions $\pi,\pi'\to \pi\times\pi'$, $\mu,\mu'\to \mu\times\mu'$. Recall (see Proposition \ref{P:bases}) that involutions in the context of matrix $C$ are acting on indices of $C$.
}
\end{remark}

\subsection{The matrix $C(H,G)$}\label{S:matrixorbits}
 When $(H,G)$ is a Gelfand pair, collection $(Q(H,G)=\C[X\times X]^G, \cdot,\times,\tr_{\cdot},\tr_{\times},\pi,\mu
)$ (with the structures defined in (\ref{Def:1},\ref{Def:2},\ref{Def:2cdottimes},\ref{E:congc},\ref{E:transp})) defines a septuple $E(H,G)$ and a triple $\calA(Q(H,G))$ from Proposition \ref{P:equiv}. Semi-simple multiplications define bases $\{\gX_i\}$ and $\{\Psi_i\}$ of minimal idempotents in $Q(H,G)$. This way we get a triple $(A,B,C)$ (\ref{E:Adef},\ref{E:Bdef},\ref{E:Cdef}) associated with $(H,G)$.

By Propositions \ref{P:equiv} and \ref{P:bases} the triple $\calA(H,G)$ contains as much information as the triple $(A,B,C)$ and involutions $\pi,\mu$. (\ref{E:invact}). We will see that (\ref{E:minidemp}) is satisfied and $C$ completely determines $A$ and $B$.
Our goal for now is to find explicit formulas for $(A,B,C)$ that come from a finite Gelfand pair $H\subset G$.

From now on to the end of the paper  we assume that $X=G/H$.
\begin{remark}\label{R:xdef}
{\rm
Let $O_i\subset X\times X$ be a $G$-orbit in $ X\times X$ so that 
\begin{equation}\label{E:xxdecomposition}
X\times X=\bigsqcup_i O_i.
\end{equation}
 $O_i$ determines an $H$ orbit $o_i\subset X$:
\[\{x_0\} \times o_i :=O_i\cap X\times \{x_0\},\quad H=St(x_0).\]

Construction of $\gX_i \in\C[X\times X]^G$ is obvious: $\gX_i$ is the characteristic functions of $O_i$. The Gram-Schmidt of $\{\gX_i\}$ is 
\begin{equation}\label{E:dataA}
(\gX_i,\gX_j)=|O_i|\delta_{ij}
=\delta_{ij}|X| |o_i| =\delta_{ij}A_i.
\end{equation}
}
\end{remark}
\begin{remark}\label{E:diagonalidemp}
{\rm
Note that $1_{\times}=\gX_1$ that corresponds to the diagonal $X\subset X\times X$ is a minimal $\cdot$-idempotent.
}
\end{remark}
\begin{proposition}\label{P:integr}
\begin{enumerate}
\item In the product of $\cdot$-idempotents $\gX_i\times \gX_j=\sum_{s=1}^ka_{ijs}\gX_s$, $\gX_i\in \C[X\times X]^G$
\begin{equation}\label{E:aconditions}
\text{ the coefficients $a_{ijs}$ are nonnegative integers. }
\end{equation}
\item The integrality condition (\ref{E:aconditions}) in the coordinate form becomes
\begin{equation}\label{E:integrality}
\sum_sC_{is}C_{js}B^{-2}_s\overline{C}_{ms}A^{-1}_m\in \Z^{\geq 0}.
\end{equation}
\end{enumerate}
\end{proposition}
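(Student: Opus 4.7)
The proof has two parts that call for very different techniques: part (1) is a combinatorial counting argument, while part (2) is a change-of-basis bookkeeping exercise.

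For part (1) I would work directly from the definition of $\times$. Since $\gX_i$ is the characteristic function of the $G$-orbit $O_i \subset X\times X$, the convolution formula (\ref{Def:2}) gives
\[
(\gX_i \times \gX_j)(x,z) \;=\; \#\{\, y \in X : (x,y)\in O_i,\ (y,z)\in O_j\,\},
\]
which is automatically a nonnegative integer. This function is $G$-invariant because it is built from $G$-invariant data, so it is constant on each orbit $O_s$. Expanding in the basis $\{\gX_s\}$, the coefficient $a_{ijs}$ is exactly the common value of $(\gX_i \times \gX_j)$ on $O_s$, hence $a_{ijs} \in \mathbb{Z}^{\geq 0}$.

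For part (2), the task is to express $a_{ijm}$ in terms of $(A,B,C)$. First, from (\ref{E:DCmatdef}) and the fact that $\pi$ fixes each $\gX_s$ (shown in the proof of Proposition \ref{P:bases}), the change-of-basis matrix $d$ defined by $\Psi_j = \sum_s d_{sj}\gX_s$ satisfies $d_{sj} = \overline{C_{sj}}/A_s$. Combined with the explicit formula $(C^{-1})_{ij} = B_i^{-1}\overline{C_{ji}}A_j^{-1}$ from that same proof, a short computation gives the inverse transition
\[
\gX_i = \sum_s (d^{-1})_{si}\,\Psi_s, \qquad (d^{-1})_{si} = B_s^{-1}C_{is}.
\]
Since the $\Psi_s$ are orthogonal idempotents for $\times$ (so $\Psi_s\times \Psi_t = \delta_{st}\Psi_s$), multiplication in the $\Psi$-basis is pointwise:
\[
\gX_i \times \gX_j \;=\; \sum_s (d^{-1})_{si}(d^{-1})_{sj}\,\Psi_s \;=\; \sum_{s,m} (d^{-1})_{si}(d^{-1})_{sj}\, d_{ms}\,\gX_m.
\]
Reading off the coefficient of $\gX_m$ and substituting the expressions for $d^{-1}$ and $d$,
\[
a_{ijm} \;=\; \sum_s B_s^{-1}C_{is}\cdot B_s^{-1}C_{js}\cdot \frac{\overline{C_{ms}}}{A_m} \;=\; \sum_s C_{is}C_{js} B_s^{-2}\overline{C_{ms}} A_m^{-1},
\]
which is exactly the quantity in (\ref{E:integrality}); combined with part (1) it is a nonnegative integer.

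The only place that requires care is tracking the two antilinear involutions and orientations of the change-of-basis matrix $d$ versus its inverse, together with the conjugations coming from the sesquilinear pairing $(\cdot,\cdot)$. Once the identity $(d^{-1})_{si} = B_s^{-1}C_{is}$ is established, everything else is a mechanical substitution, so that identification is the one genuine calculation worth isolating in the write-up.
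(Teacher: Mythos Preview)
Your proof is correct and matches the paper's approach: part (1) is exactly the paper's counting argument (the paper picks $(x,y)\in O_s$ and counts the $z$'s, which is your formula evaluated at a point of $O_s$), and part (2) is precisely the change-of-basis computation the paper explicitly leaves to the reader. Your derivation of $(d^{-1})_{si}=B_s^{-1}C_{is}$ and the subsequent substitution are correct and match the transition formulas that appear as the Corollary immediately following this proposition.
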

\begin{proof}
Suppose $(x,y)\in O_s$, where $O_s$ is the orbit that defines $\gX_s$. Then $a_{ijs}$ is the number of $z\in X$ such that $(x,z)\in O_i, (z,y)\in O_j$.

We leave verification of the second statement to the reader.
\end{proof}

\begin{corollary}
We can use the triple $(A,B,C)$ to write transition matrices between bases $\{\gX_s\}$ and $\{\Psi_j\}$:
\begin{equation}\label{E:psidef1}
\begin{split}
&\Psi_j=\sum_{s=1}^k \overline{C}_{sj}/A_{s}\gX_s\\
&\gX_s=\sum_{j=1}^k C_{sj}/B_{j}\Psi_j.
\end{split}
\end{equation}
\end{corollary}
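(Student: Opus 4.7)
The plan is to read off both formulas directly from the work already done in the proof of Proposition \ref{P:bases}, where the change-of-basis matrix $d$ between $\{\gX_s\}$ and $\{\Psi_j\}$ and the Gram matrix $C$ are related explicitly. No new ideas are needed; this is a bookkeeping consequence of the orthogonality relations established for both idempotent bases.

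First I would derive the formula for $\Psi_j$ in terms of $\{\gX_s\}$. Equation (\ref{E:psidef12}) already writes $\Psi_j=\sum_s d_{sj}\gX_s$ for some unknown scalars $d_{sj}$, and (\ref{E:DCmatdef}) computes the pairing with $\gX_i$ as $C_{ij}=(\gX_i,\Psi_j)=\overline{d}_{ij}A_i$. Solving the latter for the unknown coefficient gives $d_{sj}=\overline{C}_{sj}/A_s$, which upon substitution is precisely the first line of (\ref{E:psidef1}).

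Next I would invert this change of basis to produce the expression $\gX_s=\sum_j e_{sj}\Psi_j$. The cleanest route is to exploit the orthogonality relation (\ref{E:Bdef}), namely $(\Psi_j,\Psi_k)=B_j\delta_{jk}$, which was established in the proof of Proposition \ref{P:bases}. Since the pairing $(a,b)=\tr_{\cdot}(a\cdot\pi b)$ is linear in $a$ and antilinear in $b$, pairing both sides of $\gX_s=\sum_j e_{sj}\Psi_j$ against $\Psi_k$ gives
\[
C_{sk}=(\gX_s,\Psi_k)=\sum_j e_{sj}(\Psi_j,\Psi_k)=e_{sk}B_k,
\]
so $e_{sk}=C_{sk}/B_k$, which is the second line of (\ref{E:psidef1}).

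There is no real obstacle here; the only thing to be careful about is the antilinearity of the pairing in its second slot, which forces the complex-conjugated entries of $C$ to appear in the first expansion but not in the second. As an internal consistency check, composing the two transition matrices produces the identity precisely when $\sum_i A_i^{-1}C_{ik}\overline{C}_{il}=B_k\delta_{kl}$, i.e.\ the compatibility relation (\ref{E:compatibility}), which is already known to hold.
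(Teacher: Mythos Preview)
Your proof is correct and follows exactly the approach implicit in the paper: the Corollary is stated without a separate proof because both lines are immediate consequences of (\ref{E:psidef12}), (\ref{E:DCmatdef}), and the orthogonality (\ref{E:Bdef}) established in the proof of Proposition \ref{P:bases}. Your derivation of the second line via pairing against $\Psi_k$ and your remark about where the complex conjugate appears are precisely the bookkeeping the paper leaves to the reader.
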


\subsection{Zonal spherical function}\label{S:zonal}

The basis $\{\gX_i\}$ was identified in Remark \ref{R:xdef}. The description of the basis $\{\Psi_j\}$ uses the concept of zonal spherical function, or spherical function for short.
\begin{definition}
Let $(T(g),V)$ be a unitary finite-dimensional representation of of a finite $G$. 
We associate a function on $G$ defined by the formula:
\begin{equation}\label{E:psidef}
\psi_{\theta}(g)=\frac{\dim T}{|X|}(T(g)\theta,\theta)
\end{equation}
with a unit $H$-invariant vector $\theta$.
We call $\psi_{\theta}(g)$ the spherical function associated with a subgroup $H$ and unit invariant vector $\theta$. 
\end{definition}
The function $\psi_{\theta}(g)$ satisfies $\psi_{\theta}(hgh')=\psi_{\theta}(g)$
and descends to a function on $H\backslash X$. 

We will be writing $Spec_G(\C[X])$ for collection of irreducible representations with 
no repetitions that appear in (\ref{E:induceddec}).
Each representation is equipped with a fixed positive-definite Hermitian inner product. 
Let $T_i$ be a representation from $Spec_G(\C[X])$.
By construction $\C[X]=Ind_H^{G} \C$. 
By Frobenius formula
\[\Hom_H(Res^{G}_H T_{i},\C)\cong \Hom_G(T_{i},Ind_H^{G} \C)=\Hom_G(T_{i},\C[X])=V_i\cong \C.\]
 We conclude that $H$-invariant vector $\theta_{i}$ in $T_{i}$ is unique up to a scaling factor. We normalize it $(\theta_{i},\theta_{i})=1$.
\begin{convention}
It is convenient to use the elements of the set $Spec_G(\C[X])$ as labels of functions $\psi(g)$. Thus $\psi_i(g), i\in Spec_G(\C[X])$ will stand for the spherical function $\psi_{\theta}(g)$, where $\theta$ is a normalized $H$-invariant vector in the representation $T$ which belongs to the isomorphism class $i$.
\end{convention}

We define a Hermitian inner product on $G$ by the formula $(f,\bar{f}')=\sum_{g\in G}f(g)\bar{f'}(g)$.
\begin{proposition}\label{P:orthogonalL}
Functions $\{\psi_{i}(g)\}$ are orthogonal with respect to the inner product in $\C[G]$ and define a basis in $\C[X]^H$.
\end{proposition}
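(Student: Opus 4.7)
The plan is to deduce both claims directly from Schur orthogonality, after checking that each $\psi_i$ really is an $H$-bi-invariant matrix coefficient, and then matching dimensions with $\C[X]^H$ via the Gelfand-pair decomposition (\ref{E:induceddec}).

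First I would observe that $\psi_i(g)=\frac{\dim T_i}{|X|}(T_i(g)\theta_i,\theta_i)$ is, up to the scalar $\dim T_i/|X|$, a diagonal matrix coefficient of the unitary irreducible representation $T_i$. Since $\theta_i$ is $H$-invariant and the inner product is $G$-invariant, $(T_i(hgh')\theta_i,\theta_i) = (T_i(g)T_i(h')\theta_i,T_i(h^{-1})\theta_i) = (T_i(g)\theta_i,\theta_i)$, so $\psi_i$ is $H$-bi-invariant. Via the identification (\ref{E:simpleiso}), the space of $H$-bi-invariant functions on $G$ is naturally $\C[X]^H$, so each $\psi_i$ lies in $\C[X]^H\subset \C[G]$.

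Second, I would apply Schur orthogonality in the standard form
\[
\sum_{g\in G}(T_i(g)u,v)\overline{(T_j(g)u',v')} = \delta_{ij}\,\frac{|G|}{\dim T_i}\,(u,u')\,\overline{(v,v')}
\]
with $u=v=\theta_i$ and $u'=v'=\theta_j$. For $i\neq j$ the representations $T_i,T_j$ are non-isomorphic, so the right side vanishes and $(\psi_i,\psi_j)=0$. For $i=j$, the normalization $(\theta_i,\theta_i)=1$ together with the prefactor $\dim T_i/|X|$ yields $(\psi_i,\psi_i)=|G|\dim T_i/|X|^2>0$, establishing orthogonality and nonvanishing norms.

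Finally I would match the count with $\dim \C[X]^H$. By the Gelfand hypothesis and (\ref{E:induceddec}), $\C[X]=\bigoplus_{i\in Spec_G(\C[X])} T_i\otimes V_i$ with $\dim V_i=1$; Frobenius reciprocity (quoted in the introduction) gives $\dim T_i^H = \dim\Hom_G(T_i,\C[X])=1$ for exactly the $i\in Spec_G(\C[X])$, so $\dim \C[X]^H = |Spec_G(\C[X])|$, which is precisely the number of spherical functions. Orthogonality with nonvanishing norms forces linear independence, and the dimension match upgrades this to a basis. The only genuinely substantive input is Schur orthogonality; the rest is routine bookkeeping, and I expect no serious obstacle.
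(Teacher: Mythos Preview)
Your proof is correct and follows essentially the same route as the paper: Schur orthogonality for the orthogonality claim, then a dimension count to upgrade linear independence to a basis. The only cosmetic difference is that the paper computes $\dim\C[X]^H$ via the endomorphism-algebra isomorphism $\C[X\times X]^G\cong\bigoplus_i\Hom_\C(V_i,V_i)$ together with (\ref{E:simpleiso}), whereas you take $H$-invariants directly in the decomposition $\C[X]=\bigoplus T_i$ using Frobenius reciprocity; both yield $\dim\C[X]^H=|Spec_G(\C[X])|$.
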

\begin{proof}
We extend vector $\theta_{i}=\theta_{i,1}$ to an orthonormal basis $\{\theta_{i,1},\dots, \theta_{i,\dim(T_i)}\}$. The function $\psi_{i}(g)$ is the matrix coefficient $T_{i,1,1}(g)$ of the representation $T_i$.
The following identity is a simple corollary of the Schur orthogonality relation for matrix coefficients of finite groups:
\begin{equation}\label{E:orthogonality}
\sum_{g\in G}\psi_{i}(g)\bar{\psi}_{j}(g)=\frac{\dim T_i\dim T_j}{|X|^2}\sum_{g\in G}T_{i, 1,1}(g)\overline{T}_{j, 1,1}(g)=
\delta_{i,j}|H|\frac{\dim T_i}{|X|}.
\end{equation}
 By orthogonality $\psi_i$ are linearly independent. Their number coincides with cardinality $|Spec_G(\C[X])|$, which is equal to the number of summands in (\ref{E:induceddec}). By assumption $\dim(V_i(X))\leq 1$. Thus, by (\ref{E:simpleiso}) and Maschke isomorphism 
\begin{equation}\label{E:Maschke1}\Hom_G(\C[X],\C[X])\overset{(\ref{Def:25})}{\cong} \C[X\times X]^G\overset{(\ref{E:induceddec})}{\cong} \bigoplus_{i=1}^k \Hom_{\C}(V_i,V_i)\end{equation}
$|Spec_G(\C[X])|=\dim\C[X\times X]^G=\dim\C[X]^H$.
\end{proof}
\begin{corollary}\label{L:orthogonalL}
We interpret $H\times H$ invariant function 
\begin{equation}\label{E:Psidef1}
\Psi_{i}=\psi_{i}(g^{-1}g')
\end{equation} on $G\times G$ as a function on $G/H\times G/H=X\times X$. By construction $\Psi_{i}$ is invariant under diagonal $G$-action.
Orthogonality relation
\begin{equation}\label{E:orthogonalityproduct}
\begin{split}
&B_{ij}=\sum_{x,y\in X\times X}\Psi_{i}(x,y)\overline{\Psi}_{j}(x,y)=
\delta_{i,j}\dim T_{i}
\end{split}
\end{equation}
follows from (\ref{E:orthogonality}).
\end{corollary}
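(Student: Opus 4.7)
The plan is to lift the sum from $X\times X$ up to $G\times G$ and then apply the Schur-type orthogonality (\ref{E:orthogonality}) directly. First I would verify that the formula $\Psi_i(x,y):=\psi_i(g^{-1}g')$, where $x=gH$ and $y=g'H$, really descends to a well-defined function on $X\times X$: since $\psi_i$ is bi-$H$-invariant on $G$, replacing representatives by $gh_1$ and $g'h_2$ with $h_1,h_2\in H$ yields $\psi_i(h_1^{-1}g^{-1}g'h_2)=\psi_i(g^{-1}g')$. Diagonal $G$-invariance is automatic since $\psi_i((kg)^{-1}(kg'))=\psi_i(g^{-1}g')$ for any $k\in G$.

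Next, the projection $G\to X$ has fibres of cardinality $|H|$, so pulling the sum back from $X\times X$ to $G\times G$ picks up a factor $1/|H|^2$:
\[
\sum_{x,y\in X}\Psi_i(x,y)\overline{\Psi_j(x,y)}=\frac{1}{|H|^2}\sum_{g,g'\in G}\psi_i(g^{-1}g')\overline{\psi_j(g^{-1}g')}.
\]
Then I would change variables $h:=g^{-1}g'$; for each fixed $g\in G$ the element $g'$ sweeps out $G$ iff $h$ does, so the inner sum is independent of $g$ and
\[
\sum_{x,y\in X}\Psi_i(x,y)\overline{\Psi_j(x,y)}=\frac{|G|}{|H|^2}\sum_{h\in G}\psi_i(h)\overline{\psi_j(h)}.
\]

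Finally, substituting (\ref{E:orthogonality}) and using $|X|=|G|/|H|$ gives
\[
\frac{|G|}{|H|^2}\cdot\delta_{ij}|H|\frac{\dim T_i}{|X|}=\delta_{ij}\frac{|G|}{|H|\cdot|X|}\dim T_i=\delta_{ij}\dim T_i,
\]
which is precisely the claim $B_{ij}=\delta_{ij}\dim T_i$. The argument is essentially mechanical once the lift from $X$ to $G$ is normalised correctly; the only place requiring care is bookkeeping of the factors $|G|$, $|H|$, $|X|$, together with the $\dim T_i/|X|$ prefactor built into the definition (\ref{E:psidef}) of $\psi_i$, so that the final constant collapses exactly to $\dim T_i$.
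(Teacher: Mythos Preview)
Your proof is correct and is exactly the computation the paper has in mind: the corollary is stated as following directly from (\ref{E:orthogonality}), and your lift from $X\times X$ to $G\times G$ together with the change of variables $h=g^{-1}g'$ is the standard way to make that reduction precise. The bookkeeping of the constants $|G|,|H|,|X|$ is handled correctly.
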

\begin{proposition}\label{L:idempotents}
The functions $\Psi_{i}$
are minimal idempotents in 
$(\C[X\times X],\times)$.
The function $\Psi_{1}$ corresponding to trivial representation is equal to $1_{\cdot}/|X|$.
\end{proposition}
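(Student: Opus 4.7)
The plan is to verify directly that $\Psi_i\times\Psi_j=\delta_{ij}\Psi_i$ by reducing the convolution on $X\times X$ to a group convolution on $G$ of matrix coefficients, apply Schur orthogonality, and then upgrade orthogonal idempotents to minimal ones by a dimension count.

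First I would unfold the definition. With $x=gH$, $z=g_2H$, the identity $(\Psi_i\times\Psi_j)(x,z)=\sum_{g_1H\in X}\psi_i(g^{-1}g_1)\psi_j(g_1^{-1}g_2)$ holds, and since $\psi_i$, $\psi_j$ are $H$-biinvariant the summand is well defined on $G/H$. Lifting to $G$ (and dividing by $|H|$) and changing variables $u=g^{-1}g_1$, the right-hand side becomes $\frac{1}{|H|}(\psi_i*_G\psi_j)(g^{-1}g_2)$, where $(f*_G h)(w):=\sum_{u\in G}f(u)h(u^{-1}w)$.

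Next I would evaluate the group convolution via Schur orthogonality. Extending $\theta_i=\theta_{i,1}$ to an orthonormal basis $\{\theta_{i,\alpha}\}$ of $T_i$ and writing $\psi_i(g)=\frac{\dim T_i}{|X|}T_i^{11}(g)$ for the matrix coefficient $T_i^{11}(g)=(T_i(g)\theta_i,\theta_i)$, unitarity gives $T_j^{11}(u^{-1}w)=\sum_e\overline{T_j^{e1}(u)}\,T_j^{e1}(w)$. Schur orthogonality then yields
\[
\sum_{u\in G}T_i^{11}(u)\overline{T_j^{e1}(u)}=\frac{|G|}{\dim T_i}\delta_{ij}\delta_{e1},
\]
so a short arithmetic simplification produces $\psi_i*_G\psi_j=\delta_{ij}|H|\psi_i$, hence $\Psi_i\times\Psi_j=\delta_{ij}\Psi_i$. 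Each $\Psi_i$ is nonzero since $\Psi_i(gH,gH)=\psi_i(e)=\dim T_i/|X|>0$.

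Finally, to upgrade these orthogonal nonzero idempotents to a complete set of \emph{minimal} idempotents, observe that by (\ref{E:Maschke1}) the algebra $(\C[X\times X]^G,\times)$ is commutative semisimple of dimension $k=|\mathrm{Spec}_G(\C[X])|$, i.e.\ a product of $k$ copies of $\C$. In such an algebra any $k$ nonzero pairwise orthogonal idempotents must be the primitive ones, so $\{\Psi_i\}$ is exactly the basis of minimal idempotents for $\times$. For the second claim, the trivial representation contributes $\psi_1(g)=\frac{1}{|X|}$, so $\Psi_1(x,y)=1/|X|$, which is $1_\cdot/|X|$ by (\ref{Def:1}). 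The main obstacle is keeping track of the $|H|$, $|X|$, $\dim T_i$ normalization factors through the Schur computation so that the final constants collapse exactly as claimed.
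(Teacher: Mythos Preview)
Your proposal is correct and takes essentially the same route as the paper: both lift the $X$-convolution to a $G$-convolution of the matrix coefficients $T_i^{11}$, apply Schur orthogonality, and check that the $|H|$, $|X|$, $\dim T_i$ factors cancel to give $\Psi_i\times\Psi_j=\delta_{ij}\Psi_i$. Your write-up is in fact slightly more complete than the paper's, since you explicitly supply the dimension-count argument for minimality (via (\ref{E:Maschke1})) and the verification that $\Psi_1=1_{\cdot}/|X|$, both of which the paper's displayed proof leaves implicit.
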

\begin{proof}
Schur orthogonality relations yield the proof:
\[
\begin{split}
&\sum_{y\in X}\Psi_{i}(x,y)\Psi_{j}(y,z)=
\frac{1}{|H|}\sum_{g'\in G}\psi_{i}(g^{-1}g')\psi_{j}({g'}^{-1}g'')=\\
&\frac{1}{|H|}\frac{\dim T_i\dim T_j}{|X|^2}\sum_{g'\in G}T_{i, 1,s}(g^{-1})T_{i, s,1}(g')T_{j, 1,t}({g'}^{-1})T_{j,t,1}(g'')=\\
&\frac{1}{|H|}\frac{\dim T_i\dim T_j}{|X|^2}\sum_{g'\in G}T_{i, 1,s}(g^{-1})T_{i, s,1}(g')\overline{T}_{j, t,1}(g')T_{j,t,1}(g'')=\\
&\delta_{i,j}\frac{1}{|H|}\frac{\dim T_i^2}{|X|^2}\frac{|G|}{\dim T_j}T_{i, 1,1}(g^{-1}g'')=
\delta_{i,j}\psi_{i}(g^{-1}g'')=\delta_{i,j}\Psi_{i}(x,z),\quad x=gH, z=g''H
\end{split}\]
\end{proof}
\subsection{Computing matrix $C$}\label{S:matrixC}
Having obtained explicit formulas for $\gX_i, \Psi_j$ we can compute matrix $C$ (matrices $A$ and $B$ were computed in (\ref{E:dataA}) and (\ref{E:orthogonalityproduct})).

\begin{proposition}
In each $H$-orbit $o_i\subset X$ choose $a_i\in o_i$. The indexing is chosen so that $St(a_1)=H$. We also fix $g_i\in G$ such that $a_i=g_ia_1$. There is a bijection between $\{g_i\}$ and $\{\gX_i\}$.
In the notations of isomorphism (\ref{E:induceddec})

\begin{equation}
C_{ij}=(\gX_i,\overline{\Psi}_j)
=|o_i|\dim T_j(\theta_j, T_j(g_i)\theta_j)
\end{equation}
$\theta_j$ is a normalized $H$-invariant vector in $T_j$.
\end{proposition}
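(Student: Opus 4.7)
The plan is to expand $C_{ij} = (\gX_i, \Psi_j) = \tr_{\cdot}(\gX_i \cdot \pi\Psi_j) = \sum_{(x,y) \in X \times X} \gX_i(x,y)\,\overline{\Psi_j(x,y)}$ directly, and then exploit the $G$-invariance of both factors so that the double sum collapses to a single function value times the size of an orbit. First I would observe that $\gX_i$ is the indicator function of the $G$-orbit $O_i \subset X \times X$, so the sum restricts to $\sum_{(x,y) \in O_i} \overline{\Psi_j(x,y)}$. Since $\Psi_j$ is constructed as $\Psi_j(gH, g'H) = \psi_j(g^{-1}g')$, it is constant on $G$-orbits, and any specific representative of $O_i$ evaluates it to the same number.

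Next I would choose the representative $(x_0, a_i) = (eH, g_i H) \in O_i$, which is legitimate because by construction $a_i = g_i a_1$ and $a_1 = x_0$. At this representative,
\begin{equation*}
\Psi_j(x_0, a_i) = \psi_j(g_i) = \frac{\dim T_j}{|X|}\bigl(T_j(g_i)\theta_j,\theta_j\bigr),
\end{equation*}
so $\overline{\Psi_j(x_0,a_i)} = \frac{\dim T_j}{|X|}\bigl(\theta_j, T_j(g_i)\theta_j\bigr)$ by the Hermitian symmetry of the inner product on $T_j$. Combined with $|O_i| = |X|\,|o_i|$ from (\ref{E:dataA}), the $|X|$ factors cancel and yield exactly the claimed formula
\begin{equation*}
C_{ij} = |O_i|\cdot\overline{\Psi_j(x_0,a_i)} = |o_i|\,\dim T_j\,\bigl(\theta_j, T_j(g_i)\theta_j\bigr).
\end{equation*}

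Before closing I would also justify that the values $\gX_i \leftrightarrow g_i$ really do give a bijection: distinct $H$-orbits $o_i \subset X$ yield distinct $G$-orbits $O_i \subset X\times X$ via $\{x_0\} \times o_i = O_i \cap (X \times \{x_0\})$, and the choice of $g_i$ with $a_i = g_i a_1$ parameterizes $o_i$ modulo the stabilizer $H = St(a_1)$. Since $\psi_j$ is bi-$H$-invariant, the value $\psi_j(g_i)$ depends only on the double coset $Hg_iH$, hence only on $o_i$, so the final expression is well-defined.

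The computation is essentially bookkeeping; the only mild obstacle is keeping the two complex conjugations straight — the one coming from the definition of $(\cdot,\cdot)$ via $\pi$, and the symmetry $\overline{(u,v)_{T_j}} = (v,u)_{T_j}$ inside the matrix coefficient — and making sure the normalization factor $\dim T_j / |X|$ in the definition of $\psi_j$ cancels against $|O_i|/|o_i| = |X|$ to produce the clean expression stated.
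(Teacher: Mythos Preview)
Your proof is correct and follows essentially the same approach as the paper. Both arguments reduce $C_{ij}$ to $\sum_{(x,x')\in O_i}\overline{\Psi}_j(x,x')$ and then use that $\Psi_j$ is constant on the $G$-orbit $O_i$; the paper lifts the sum to $G\times G$ and passes through the double coset $Hg_iH$ explicitly, whereas you invoke $G$-invariance directly to evaluate at the single representative $(x_0,a_i)$ and multiply by $|O_i|=|X|\,|o_i|$, which is a slightly more streamlined version of the same computation.
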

\begin{proof}

\begin{equation}\label{E:ccomputation}
\begin{split}
&(\gX_i,\overline{\Psi}_j)=\sum_{(x,x')\in O_i}\overline{\Psi}_j(x,x')=\\
&\frac{1}{|H|^2}\sum_{g^{-1}g'\in Hg_iH}\overline{\psi}_j(g^{-1}g')=\frac{|G|}{|H|^2}\sum_{g\in Hg_iH}\overline{\psi}_j(g)=\\
&\frac{|G|}{|H|^2}\sum_{g\in Hg_iH}\frac{\dim T_j}{|X|}\overline{(T_j(g)\theta_j,\theta_j)}=|o_i|\dim T_j(\theta_j,T_j(g_i)\theta_j)\\
\end{split}
\end{equation}
\end{proof}

\begin{proposition}
Matrix $C$ satisfies
\begin{equation}\label{E:cfirsteq}
\sum_iC_{ij}=\delta_{1j}|X|,
\end{equation}
\begin{equation}\label{E:csecondeq1}
 \sum_jC_{ij}=\delta_{i1}|X| 
\end{equation}

\end{proposition}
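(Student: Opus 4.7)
The plan is to deduce both identities directly from the completeness relations $\sum_i \gX_i = 1_{\cdot}$ (the characteristic functions of the orbits $O_i$ partition $X\times X$) and $\sum_j \Psi_j = 1_{\times}$ (minimal idempotents of $(Q,\times)$ sum to its unit), combined with the sesquilinearity of $(\cdot,\cdot)$:
\begin{equation*}
\sum_i C_{ij} = \bigl(\textstyle\sum_i \gX_i,\Psi_j\bigr)=(1_{\cdot},\Psi_j),\qquad
\sum_j C_{ij}=\bigl(\gX_i,\textstyle\sum_j\Psi_j\bigr)=(\gX_i,1_{\times}).
\end{equation*}

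For the second identity I would simply unwind the pairing. Since $1_{\times}(x,y)=\delta_{xy}$ is real, $(\gX_i,1_{\times})=\sum_{(x,y)\in O_i}\delta_{xy}$ counts diagonal points in the orbit $O_i$. The diagonal $\{(x,x):x\in X\}$ is a single $G$-orbit (the diagonal $G$-action is transitive on it because $G$ acts transitively on $X$), so by convention this orbit is $O_1$. Hence the count equals $|X|$ if $i=1$ and $0$ otherwise, giving $\sum_j C_{ij}=\delta_{i1}|X|$.

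For the first identity I would use the bi-$H$-invariance of $\psi_j$ to rewrite the sum over $X\times X$ as a sum over $G$. Using $\Psi_j(gH,g'H)=\psi_j(g^{-1}g')$,
\begin{equation*}
(1_{\cdot},\Psi_j)=\sum_{(x,y)\in X\times X}\overline{\Psi_j(x,y)}
=\frac{1}{|H|^2}\sum_{g,g'\in G}\overline{\psi_j(g^{-1}g')}
=\frac{|G|}{|H|^2}\sum_{u\in G}\overline{\psi_j(u)}.
\end{equation*}
Since $\overline{\psi_j(u)}=\psi_j(u^{-1})$ from the unitarity of $T_j$, the inner sum equals $\sum_u \psi_j(u)=\frac{\dim T_j}{|X|}\bigl(\sum_u T_j(u)\theta_j,\theta_j\bigr)$. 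The operator $\sum_u T_j(u)$ is $|G|$ times the projector onto $T_j^G$, which vanishes for any nontrivial irreducible $T_j$. Thus the contribution is $0$ unless $j=1$; in the trivial case $\psi_1\equiv 1/|X|$, so the sum gives $|G|\cdot |H|/|H|^2=|X|$. This yields $\sum_i C_{ij}=\delta_{1j}|X|$.

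There is no real obstacle here; the argument is pure bookkeeping. The only place where care is needed is tracking the complex conjugation in $(a,b)=\tr_{\cdot}(a\cdot \pi b)$ and confirming that the identity $\sum_u\overline{\psi_j(u)}=0$ for $j\neq 1$ follows from Schur orthogonality (equivalently, from the vanishing of the invariant projector on a nontrivial irreducible). Everything else is a direct application of the previously established formulas (\ref{E:Psidef1}), (\ref{E:psidef}), and Proposition \ref{L:idempotents}.
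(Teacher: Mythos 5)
Your proof is correct and rests on the same ingredients as the paper's: both column and row sums are pairings of $\Psi_j$ and $\gX_i$ against the units $1_{\cdot}=\sum_i\gX_i=|X|\Psi_1$ and $1_{\times}=\sum_j\Psi_j=\Delta_X$, evaluated via Schur orthogonality (your invariant-projector argument for $\sum_u\psi_j(u)=0$, $j\neq 1$, is just the orthogonality relation (\ref{E:orthogonalityproduct}) in disguise, and your count of diagonal points in $O_i$ is the paper's substitution $g\to g_i$ done directly on $X\times X$). The only difference is packaging: you invoke completeness of the minimal idempotents where the paper computes the Fourier coefficients of $\Delta_X$ explicitly, which makes your treatment of the two identities more symmetric but not substantively different.
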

\begin{proof}
The first equality is the corollary of orthogonality of $\Psi_1(x,x')=\frac{1}{|X|}$ and $\{\Psi_i(x,x')|i\neq 1\}$.
Indeed, by taking $O_i$ as in (\ref{E:xxdecomposition}) and using the first line of (\ref{E:ccomputation}) we get 
\[\delta_{1,j}=\frac{1}{|X|}\sum_{x,x'\in X} \overline{\Psi}_j(x,x') =\frac{1}{|X|}\sum_i\sum_{(x,x')\in O_i} \overline{\Psi}_j(x,x') =\frac{1}{|X|}\sum_{i}C_{i,j}.\]
This is equivalent to (\ref{E:cfirsteq}).

The second equality follows from Fourier expansion of the characteristic function $\Delta_X$ of the diagonal $X\subset X\times X$ :
\[\begin{split}
&\Delta_X(x,x')=\sum_j c_j \Psi_j(x,x'),\quad c_j= \frac{1}{\dim T_j}\sum_{x,x'\in X} \Delta_X(x,x')\overline{\Psi}_j(x,x')=\\
&=\frac{|X|}{\dim T_j}\overline{\psi}_j(1)=1.
\end{split}\]
So $\sum_j\Psi_j(x,x')=\Delta_X(x,x')$. In terms of the functions $\psi_j$ and characteristic function $\delta_{H}(g)$ of the subgroup $H$ this equality becomes
\[\sum_j\dim T_j(T_j(g)\theta_j,\theta_j)=|X|\delta_{H}(g).\]
On substituting $g\to g_i$ and multiplying both sides by $|o_i|$ this equality becomes equation (\ref{E:csecondeq1}).

\end{proof}

\begin{corollary}\label{C:corsumrowcol}
Matrix $C$ completely determines diagonal matrices $A$ and $B$
\begin{equation}\label{E:ABfromC}
\sum_iC_{1i}=\sum_iC_{i1}=|X|\quad A_{i}/|X|=C_{i1},\quad B_{i}=C_{1i}
\end{equation}
This follows from the inspection of (\ref{E:ccomputation}) or from Proposition \ref{P:bases} item (\ref{E:idempones}), Remark \ref{E:diagonalidemp}, and Proposition \ref{L:idempotents}.
\end{corollary}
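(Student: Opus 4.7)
The plan is to deduce the four claimed identities from two independent routes already laid out in the text, and then check that they agree. The cleanest route is via Proposition \ref{P:bases}(\ref{E:idempones}). First I would verify that hypothesis (\ref{E:minidemp}) is in force in our setting: by Remark \ref{E:diagonalidemp}, $\gX_1$ (the characteristic function of the diagonal $X\subset X\times X$) equals $1_{\times}$ and is a minimal $\cdot$-idempotent; by Proposition \ref{L:idempotents}, $\Psi_1$ (the spherical function attached to the trivial representation $T_1$) equals $1_{\cdot}/|X|$ and is a minimal $\times$-idempotent. With both units thereby identified as minimal idempotents, formula (\ref{E:cacbrelspec}) applies verbatim and yields $A_i=|X|C_{i1}$ and $B_j=C_{1j}$.

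Next I would obtain the row and column sums from the identities (\ref{E:cfirsteq}) and (\ref{E:csecondeq1}) already proven in the preceding proposition: specializing $j=1$ in $\sum_i C_{ij}=\delta_{1j}|X|$ gives $\sum_i C_{i1}=|X|$, and specializing $i=1$ in $\sum_j C_{ij}=\delta_{i1}|X|$ gives $\sum_j C_{1j}=|X|$. Combined with the two idempotent identifications above, this completes all four assertions of the corollary.

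As a sanity check (and the second route mentioned in the statement), I would substitute the trivial representation into (\ref{E:ccomputation}): since $T_1$ is one-dimensional with $\theta_1=1$ and $T_1(g_i)=1$, we get $C_{i1}=|o_i|\cdot 1\cdot 1=|o_i|$, matching $A_i/|X|=|o_i|$ from (\ref{E:dataA}). Symmetrically, taking $i=1$ forces $g_1=e$ and $|o_1|=1$ (since $a_1$ is $H$-fixed by construction), whence $C_{1j}=\dim T_j\,(\theta_j,\theta_j)=\dim T_j$, matching $B_j=\dim T_j$ from (\ref{E:orthogonalityproduct}). The row and column sums then also admit direct interpretations: $\sum_i C_{i1}=\sum_i|o_i|=|X|$ because the $H$-orbits partition $X$, and $\sum_j C_{1j}=\sum_j \dim T_j=\dim \C[X]=|X|$ by the multiplicity-one decomposition (\ref{E:induceddec}).

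There is no substantive obstacle here: the entire content is bookkeeping across three already-established results. The only step requiring any care is confirming that both $\gX_1$ and $\Psi_1$ are genuinely minimal idempotents (not merely idempotents) so that (\ref{E:cacbrelspec})—rather than only the weaker (\ref{E:cacbrelgen})—can be invoked; minimality of $\gX_1$ is immediate from the fact that the diagonal is a single $G$-orbit in $X\times X$, and minimality of $\Psi_1$ is contained in Proposition \ref{L:idempotents}.
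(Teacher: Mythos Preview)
Your proposal is correct and follows precisely the two routes the paper itself indicates: you invoke Proposition~\ref{P:bases}(\ref{E:idempones}) after verifying minimality via Remark~\ref{E:diagonalidemp} and Proposition~\ref{L:idempotents}, and you cross-check by direct substitution into (\ref{E:ccomputation}). The only addition is your explicit use of (\ref{E:cfirsteq}) and (\ref{E:csecondeq1}) for the row and column sums, which is the natural way to extract those equalities and is implicit in the paper's sketch.
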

\begin{remark}\label{R:character}
{\rm
Matrix $C$ for $(G,G\times G)$ ($G$ is embedded diagonally) is nothing else but the character table for the group $G$.
}
\end{remark}
\subsection{Arithmetic properties of $C$}\label{S:Arithmetic}
In this section we will take a closer look at the coefficients of the matrix $C$, which was derived from a Gelfand pair $(H,G)$.

\begin{proposition}
The coefficients of $C$ generate a Galois extension $L$ of $\Q$ whose Galois group is abelian. $L$ is a minimal splitting field for regular representation $G$ in $\Q[G/H]$.

\end{proposition}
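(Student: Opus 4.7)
The plan is to identify $L$ with the character field $F := \Q(\chi_j(g) : j \in Spec_G(\C[X]),\, g \in G)$ of the irreducible constituents appearing in $\C[G/H]$, and then to derive abelianness and the splitting-field property from two classical facts: that characters of a finite group take values in $\Q(\zeta_n)$ for $n = \exp(G)$, and that the Gelfand (multiplicity-one) condition kills all Schur-index obstructions.

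For the inclusion $L \subseteq F$, I exploit the fact that $T_j^H = \C\theta_j$ is one-dimensional. The $H$-averaging projector $P_H := \frac{1}{|H|}\sum_{h \in H} T_j(h)$ has rank one, and on its image acts $P_H T_j(g) P_H = (\theta_j, T_j(g)\theta_j)\, P_H$. Taking traces yields
\[ (\theta_j, T_j(g_i)\theta_j) \;=\; \tr(P_H T_j(g_i)) \;=\; \frac{1}{|H|}\sum_{h \in H}\chi_j(hg_i). \]
Substituting into the formula $C_{ij} = |o_i|\dim T_j\cdot(\theta_j, T_j(g_i)\theta_j)$ established in the previous proposition exhibits each $C_{ij}$ as a $\Q$-linear combination of character values, so $L \subseteq F$. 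The abelian Galois property is then immediate: $T_j(g)$ has eigenvalues that are $n$-th roots of unity, so $F \subseteq \Q(\zeta_n)$, and $\mathrm{Gal}(\Q(\zeta_n)/\Q) \cong (\Z/n\Z)^{\times}$ is abelian, whence so is $\mathrm{Gal}(F/\Q)$.

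For the reverse inclusion $F \subseteq L$, I use the explicit expansion $\Psi_j = \sum_i \overline{C}_{ij}\, A_i^{-1}\, \gX_i$ from (\ref{E:psidef1}). Since $A_i = |X||o_i|$ is a positive integer, each minimal idempotent $\Psi_j$ is defined over $L$, and hence so is the $L$-submodule $W_j := \Psi_j \cdot L[G/H] \subset L[G/H]$, which is $G$-stable because $\Psi_j$ is a $G$-equivariant endomorphism. By the Gelfand hypothesis, $W_j \otimes_L \C$ is the unique copy of $T_j$ in $\C[G/H]$, so $W_j$ is an $L$-rational form of $T_j$. Its character is $\chi_j$, proving $\chi_j(g) \in L$ for every $j, g$, hence $F \subseteq L$.

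Thus $L = F$. It is a splitting field for $\Q[G/H]$ because each $T_j$ is realized as $W_j$ over $L$, and it is minimal because any splitting field must contain every character value. The subtle point is the implication "$\Psi_j$ defined over $L \Rightarrow T_j$ defined over $L$": this will be the main place where the Gelfand property is genuinely used, because without multiplicity one the idempotent $\Psi_j$ would only cut out a higher-dimensional isotypic block and could fail to yield an $L$-rational absolutely irreducible summand (a Schur-index obstruction). Multiplicity one eliminates that possibility and closes the argument.
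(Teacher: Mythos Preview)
Your proof is correct and more informative than the paper's. The paper argues abstractly: it invokes a theorem from Curtis--Reiner to place the idempotents $\Psi_j$ (and hence the $C_{ij}$) inside the cyclotomic field $K=\Q(\zeta_{e(G)})$, infers that the subfield $L$ they generate is abelian over $\Q$, and then observes that formula~(\ref{E:psidef1}) defines the splitting idempotents over $L$, so $L$ splits $\Q[G/H]$. Minimality is asserted in the statement but not argued.

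Your route is different on two counts. First, instead of the Curtis--Reiner black box you use the rank-one projector identity
\[
(\theta_j,T_j(g_i)\theta_j)=\tr\!\bigl(P_HT_j(g_i)\bigr)=\frac{1}{|H|}\sum_{h\in H}\chi_j(hg_i),
\]
which places each $C_{ij}$ directly in the character field $F$; this is more elementary and yields the sharper inclusion $L\subseteq F$ rather than merely $L\subseteq K$. Second, you run the idempotent argument in the opposite direction as well: since the $\Psi_j$ lie in $L[X\times X]^G$, their images $W_j=\Psi_j\cdot L[X]$ are $L$-rational forms of the $T_j$ (here multiplicity one is essential, exactly as you note), so the characters $\chi_j$ take values in $L$ and $F\subseteq L$. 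The equality $L=F$ then gives minimality for free, since any splitting field contains every character value. So your argument both avoids the external reference and actually proves the minimality clause that the paper's proof leaves implicit.
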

\begin{proof}
Recall that exponent $e(G)$ of a finite group $G$ is a minimal positive number such that $g^{e(G)}=1 \forall g\in G$.
By \cite{CurtisReiner} Theorem 41.1 p. 292 any irreducible representation $T$ of a finite group $G$ defined over $K=\Q[\sqrt[k]{1}], k=e(G)$ has $\Hom_G(T,T)=K$.
The convolution algebra $K[X,X]^G$, being an algebra of endomorphisms of $K[X]$, by the above theorem is a direct sum $\prod_{i=1}^n K$ (see (\ref{E:Maschke1}) with $\C$ replaced by $K$). Basis $\{\gX_i \}$ belongs to $\Q[X,X]^G\subset K[X,X]^G$.
By the cited theorem idempotents $\{\Psi_i\}$ are defined over $K$ and the inner products $(\gX_i,\overline{\Psi}_j)\in K$. Let $L$ be a subfield of $K$ generated by $(\gX_i,\overline{\Psi}_j)$. The Galois group of $K$ is an abelian group $\Z_{k}^{\times}$. By the elementary Galois theory there is a subgroup $H(L)\subset \Z_{k}^{\times}$, which fixes elements of $L$ and $Gal(L/\Q)=\Z_{k}^{\times}/H(L)$.

$\Q[G/H]$ splits over $L$ because of the formula (\ref{E:psidef1}) for splitting idempotents $\Psi_j$. 

\end{proof}
\begin{proposition}
The entries of the matrix $C_{sj}/B_j$ are algebraic integers.
\end{proposition}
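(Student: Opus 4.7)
The plan is to interpret $C_{sj}/B_j$ as an eigenvalue of an integer matrix and then apply the standard fact that such eigenvalues are algebraic integers.

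First I would recall the identity (\ref{E:psidef1}), namely
\[\gX_s=\sum_{j=1}^{k}\frac{C_{sj}}{B_{j}}\Psi_j,\]
which, since the $\Psi_j$ are the minimal idempotents of the semisimple commutative algebra $(\C[X\times X]^{G},\times)$, exhibits $C_{sj}/B_j$ as the scalar by which $\gX_s$ acts on the one-dimensional factor indexed by $j$. Equivalently, if we realize $(\C[X\times X]^{G},\times)$ as the endomorphism algebra of $\C[X]$ via (\ref{Def:25}) and use (\ref{E:Maschke1}) to decompose $\C[X]=\bigoplus_{j}T_{j}$, then $\gX_s$ lies in the center, acts by a scalar on each $T_j$ by Schur's lemma, and that scalar is $C_{sj}/B_j$. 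This identifies the numbers we want to analyze as eigenvalues of the convolution operators $\gX_s$.

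Next I would invoke Proposition \ref{P:integr}, which says that $\gX_i\times\gX_j=\sum_{s}a_{ijs}\gX_s$ with $a_{ijs}\in\Z_{\geq 0}$. Consequently the $\Z$-module $\Lambda:=\bigoplus_{s}\Z\gX_s$ is a subring of $(\C[X\times X]^{G},\times)$ containing the unit $1_\times$ (by (\ref{Def:26}) and the fact that $1_\times$ equals $\gX_1$, the diagonal characteristic function, cf.\ Remark \ref{E:diagonalidemp}). Left multiplication by $\gX_s$ on $\Lambda$ is then represented, in the basis $\{\gX_i\}$, by an integer matrix $M_s:=(a_{sij})_{i,j}$.

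Finally, over $\C$ the operator of $\times$-multiplication by $\gX_s$ on $\C[X\times X]^{G}=\Lambda\otimes_{\Z}\C$ has spectrum $\{C_{sj}/B_j:j=1,\dots,k\}$ by the first step. Hence each $C_{sj}/B_j$ is a root of the characteristic polynomial of $M_s$, which is a monic polynomial in $\Z[t]$. Therefore $C_{sj}/B_j$ is an algebraic integer, which is the required conclusion.

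There is essentially no obstacle: the only content is the translation between the two ways of viewing $\gX_s$ (as a $\Z$-linear endomorphism of $\Lambda$ and as a central element acting by scalars on the $T_j$), together with the integrality of the structure constants already recorded in (\ref{E:aconditions}). The same argument in fact shows that $C_{sj}/B_j$ lies in the ring of integers of the splitting field $L$ identified in the preceding proposition.
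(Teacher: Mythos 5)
Your proposal is correct and follows essentially the same route as the paper: both identify $C_{sj}/B_j$ via (\ref{E:psidef1}) as the eigenvalues of the $\times$-multiplication operator by $\gX_s$, and both conclude integrality from the fact that this operator is represented in the basis $\{\gX_i\}$ by a matrix with nonnegative integer entries (Proposition \ref{P:integr}), so its monic integer characteristic polynomial annihilates each eigenvalue. The extra remarks about the subring $\Lambda$ and the splitting field $L$ are harmless elaborations of the same argument.
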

\begin{proof}
Let us expand $\gX_s$ in the basis $\{\Psi_j\}$ as in (\ref{E:psidef1}):
By definition $\Psi_j$ is an idempotent for $\times$-product. From this we deduce that
\[\gX_s\times \Psi_j=C_{sj}/B_j\Psi_j\]
It means that $\{\Psi_j\}$ is an eigenbasis for the operator $L_sy:=\gX_s\times y$ and $C_{sj}/B_j$ are its eigenvalues. The matrix of $L_s$ in the basis $\{\gX_s\}$ has integer coefficients (Proposition \ref{P:integr}). The characteristic polynomial of this matrix is monic with integer coefficients. By definition its roots are algebraic integers.
\end{proof}
\begin{remark}
{\rm
By Proposition 10.2 Section I in \cite{Neukirch} p.60 the set of algebraic integers in $\Q[\sqrt[k]{1}]$ ($k=e(G)$) is a free module $\Z[\zeta]$ over $\Z$. As a ring the module $\Z[\zeta]$ is generated by a primitive root of unity $\zeta$. Let $W_k\subset \Z[\zeta]$ be the group of roots of unity. $W_k$ contains a subset $A_k$ such that $|A_k|=|\Z_{k}^{\times}|$ (the orbit of $\zeta$ under the action of Galois group $\Z_{k}^{\times}$) such that $A_k$ is a basis for $\Z[\zeta]$.
 By utilizing Corollary \ref{C:corsumrowcol} we get
\begin{equation}\label{E:integrality2bis}
C_{sj}/C_{1j}=\sum_{\zeta^r\in A_k} m_{sjr}\zeta^r, m_{sjr}\in \Z.
\end{equation}
}
\end{remark}
\subsection{Duality isomorphism}\label{S:Dualityisomorphism}
Let us spell out explicitly how duality $D$ (\ref{E:Satakegen}) isomorphism is defined.
Let $(H,G)$ be a Gelfand pair and $(\cH,\cG)$ be the dual Gelfand pair. 

We choose a basis of minimal idempotents $\{\gX_i\}$ for $U
$ and $\{\Psi_j\}$ for $W
$. There are similar bases $\{\check{\gX}_i\}$ for $\check{U}$ and $\{\check{\Psi}_j\}$ for $\check{W}$.
The duality map $D$ acts by
\[D(\gX_i)=\check{\Psi}_{\sigma(i)},\quad D(\Psi_j)=\check{\gX}_{\tau(j)}\]
$\sigma$ and $\tau$ are some bijection between indexing sets.
The fact that $D$ is an isomorphism of structures (see Remark \ref{R:catTdef}) manifests itself in equalities 
\begin{equation}\label{E:identity1}
 C_{ij}=\overline{\check{C}}_{\tau(j)\sigma(i)}.
\end{equation}
Now it will be convenient to identify the set of labels $\setA,\setB$ with the set $\{1,\dots,|X|\}$ in a such a way that
$A_{i}\leq A_{i+1},B_{i}\leq B_{i+1}$. 
Without loss of generality we can assume that $\tau=\sigma=\id$. It follows from Corollary \ref{C:corsumrowcol} that 
\begin{equation}\label{E:identity0}
\begin{split}
&|X|=\sum_iC_{1i}=\sum_iC_{i1}=\sum_i\check{C}_{i1}=\sum_i\check{C}_{1i}=|\check{X}|\\
&\text{and } A_{i}/|X|=C_{i1}=\check{C}_{1i}=\check{B}_{i},\quad B_{i}=C_{1i}=\check{C}_{i1}=\check{A}_{i}/|\check{X}|.
\end{split}
\end{equation}

Therefore
\[|G/H|=|X|=|\check{X}|=|\check{G}/\check{H}|.\]
The same way from (\ref{E:dataA},\ref{E:orthogonalityproduct},\ref{E:identity0}) we read that 
\begin{equation}\label{E:orbitrep}
|o_i|=
\dim \check{T}_{i}.
\end{equation}
It means that duality interchanges sizes of $H$-orbits $a=A/|X|$ and dimensions $b=B$ of irreducible components the regular $G$ representation in $\C[X]$. We have seen this in the example \ref{P:illustration}.

Duality transformation produces from the triple $(A,B,C)$ a new triple 
\begin{equation}
(\check{A}:=|X|B,\check{B}:=|X|^{-1}A,\check{C}:=C^{\dagger})\text{ where } |X|:=\sum_i B_i
\end{equation}
with $\check{\pi}=\mu,\check{\mu}=\pi$ as in (\ref{E:invact}).
In order for the triple $(\check{A},\check{B},\check{C})$ to arise from a Gelfand pair $(\check{H},\check{G})$ it must satisfy some consistency conditions.
\begin{proposition}
The triple $(\check{A},\check{B},\check{C})$ satisfies (\ref{E:compatibility}),(\ref{E:cfirsteq}),(\ref{E:csecondeq1}),(\ref{E:ABfromC}) with all the variables replaced by their checked modifications.
\end{proposition}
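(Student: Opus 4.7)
The plan is a direct substitution: each identity for $\check{C}$ is either a complex-conjugate-and-transpose of the corresponding identity for $C$, or follows from the companion version of the orthogonality relation already derived inside the proof of Proposition \ref{P:bases}.

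First I would pin down $|\check{X}|$. By definition $|\check{X}|=\sum_i\check{B}_i=\sum_i A_i/|X|$, and Corollary \ref{C:corsumrowcol} applied to the original triple gives $\sum_i A_i/|X|=\sum_i C_{i1}=|X|$. So $|\check{X}|=|X|$ at the outset, which is needed before the remaining identities even make sense. Since $A_i$ and $B_i$ are real by (\ref{E:Adef}) and (\ref{E:Bdef}), the checked version of (\ref{E:ABfromC}) is immediate: $\check{C}_{i1}=\overline{C}_{1i}=B_i=\check{A}_i/|\check{X}|$ and $\check{C}_{1i}=\overline{C}_{i1}=A_i/|X|=\check{B}_i$, with row and column sums $|\check{X}|$ falling out after conjugating the original sums.

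Next I would dispatch (\ref{E:cfirsteq}) and (\ref{E:csecondeq1}) together. Since $\check{C}_{ij}=\overline{C}_{ji}$,
\[
\sum_i\check{C}_{ij}=\overline{\sum_i C_{ji}}=\overline{\delta_{j1}|X|}=\delta_{1j}|\check{X}|,
\]
where the middle step applies (\ref{E:csecondeq1}) to $C$ with a renamed dummy index; the companion relation $\sum_j\check{C}_{ij}=\delta_{i1}|\check{X}|$ follows symmetrically from (\ref{E:cfirsteq}) for $C$.

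The only step with any substance is (\ref{E:compatibility}) for $\check{C}$. Substituting the definitions yields
\[
\sum_i\check{A}_i^{-1}\check{C}_{ik}\overline{\check{C}}_{il}
=\frac{1}{|X|}\sum_i B_i^{-1} C_{li}\overline{C}_{ki}.
\]
The hard part, if there is one, is to notice that this is \emph{not} (\ref{E:compatibility}) itself (which contracts in the first index of $C$), but the companion identity (\ref{E:secondeq}) derived inside the proof of Proposition \ref{P:bases} as the other matrix product $C\cdot C^{-1}=I$, namely $A_l\delta_{lk}=\sum_s B_s^{-1}C_{ls}\overline{C}_{ks}$. Plugging that in, the right-hand side collapses to $A_l|X|^{-1}\delta_{lk}=\check{B}_k\delta_{kl}$, which completes the verification.
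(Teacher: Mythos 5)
Your proof is correct and follows essentially the same route as the paper's: the checked versions of (\ref{E:cfirsteq}), (\ref{E:csecondeq1}), and (\ref{E:ABfromC}) come from conjugating and transposing the originals, and the checked compatibility reduces to the companion orthogonality (\ref{E:secondeq}), which is precisely what the paper extracts by noting that the left inverse of $C^{T}$ is also its right inverse. Your write-up just makes the index bookkeeping (and the preliminary identity $|\check{X}|=|X|$) explicit.
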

\begin{proof}
Equation (\ref{E:compatibility}) tells us that $A^{-1}_{ii}\overline{C}_{ij}B^{-1}_{jj}$ is the inverse to the transpose of $C_{ij}$. From this we infer that $\sum_{k}C_{ik} A^{-1}_{jj}\overline{C}_{jk}B^{-1}_{kk}=\delta_{ij}$, which is equivalent to
\[\sum_{k}C_{ik} \overline{C}_{jk}\frac{1}{|X|B_{kk}}=\frac{1}{|X|}A_{jj} \delta_{ij}.\]

Duality interchanges equations (\ref{E:cfirsteq}),(\ref{E:csecondeq1}). It also swaps last pair of equations in (\ref{E:ABfromC}).

\end{proof}
\begin{definition}
 $(A,B,C)$ is called an integral triple if $(\check{A},\check{B},\check{C})$ satisfies (\ref{E:integrality}) and (\ref{E:integrality2bis}).
In terms of $(A,B,C)$ this conditions reads

\begin{equation}\label{E:integrality2}
|X|\sum_sC_{si}C_{sj}A^{-2}_s\overline{C}_{sm}B^{-1}_m\in \Z^{\geq 0}\quad \forall i,j,m
\end{equation}
\begin{equation}\label{E:integrality4}
C_{is}/C_{i1}=\sum_{\zeta^r\in A_k} n_{isr}\zeta^r, n_{isr}\in \Z, k=e(G)\quad \forall i,s.
\end{equation}
Strictly speaking the second equation is a corollary of the first. Still, it is worthwhile to write it separately because it is easy to verify.
\end{definition}

\begin{definition}\label{D:selfduality}

$(A,B,C)$ is called a self-dual triple if $A_{i}=B_{i}$ and there are $\sigma,\tau$ such that $A_{i}=A_{\sigma(i)}$, $B_{i}=B_{\tau(i)}$ and $\overline{C}_{ji}=C_{\sigma(i)\tau(j)}$. Moreover $\sigma \pi =\mu \sigma $, $\tau \pi =\mu \tau $.
\end{definition}

\begin{definition}\label{D:symmetry}
Let $L$ be a minimal splitting extension of $\Q$ for regular representation $\Q[X]$.
An element $(\sigma,\tau,g)\in S_{|X|}\times S_{|X|}\times Gal(L/\Q)$ is a symmetry of the triple
$(A,B,C)$ if $A_{i}=A_{\sigma(i)}$, $B_{i}=B_{\tau(i)}$, $C_{ij}=gC_{\sigma(i)\tau(j)}$. $\sigma \pi =\pi \sigma $, $\tau\mu=\mu\tau$.
As $L\subset \Q^{ab}\subset \C$, $g$ commutes with the complex conjugation.
\end{definition}

\subsection{Galois group of $C$ and duality}\label{S:Galoisgroupandduality}
 Let $L$ be the splitting field of the regular $\check{G}$-representation $\Q[\check{X}]$. An element $g$ of the Galois group $Gal(L/\Q)$ (which we know is abelian) permutes irreducible sub-representations of $L[\check{X}]$. This way $g$ defines a permutation $\check{\tau}_g$ of idempotents $\{\Psi_i\}$. As a result, we have an element $(1,\check{\tau}_{g^{-1}},g)$ of the group $ Aut(\check{A},\check{B},\check{C})$ defined in (\ref{D:symmetry}). By abuse of notation, we will say that $(1,\check{\tau}_{g},g)$ is an element of $Gal(L/\Q)$. The coefficients of $C$ and $C^{\dagger}$ generate the same subfield of $\C$. From this we conclude that $(\check{\tau}_{g},1,g)$ is a symmetry of $({A},{B},{C})$.

Let us define groups $Aut_{H}(G):=\{\phi\in Aut(G)|\phi H\subset H\}$ and $Out_{H}(G):=Aut_{H}(G)/H$. An element $k$ of $Aut_{H}(G)$ defines a permutation $\alpha_k$ of double cosets $H\backslash G/H$. Homomorphism $k\to \alpha_k$ factors through $Out_{H}(G)$. We interpret $\alpha_k$ as a permutation of idempotents $\{\gX_i\}$.
 $Aut_{H}(G)$ defines an automorphism of the group algebra $L[G]$. It leaves invariant subalgebra $\C[H\backslash G/H]$. This way we get a permutation $\beta_k$ of idempotents $\{\Psi_i\}$. To summarize: we have a homomorphism $\nu:Out_{H}(G)\to S_{|X|}\times S_{|X|}\times Gal(L/\Q)$,  
$\nu(k)=(\alpha_k,\beta_k,1)$, which is a symmetry of $(A,B,C)$ related to $H,G$. We will refer to elements $\{(\alpha_k,\beta_k,1)|k\in Z(Out_{H}(G))\}$ as elements of $Z_{H}(G)$.

In the above setup we conjecture that $(\check{\tau}_{g},\tau_{g^{-1}},1)$ is an isomorphism $Gal(L/\Q)$ with $Z_{H}(G)$.
$Gal(L/\Q)$ is the same for $H,G$ and $\check{H},\check{G}$. Conjecture implies that \[Z_{\check{H}}(\check{G})\cong Z_{H}(G).\]
 Suppose that $Aut(G)=G/Z(G)$. In this case $Out_{H}(G)=N(H)/H$. The group $N(H)$ is the normalizer of $H$ in $G$.
 There is an embedding $\C[N(H)/H]\to \C[H\backslash G/H]$. So if $\C[H\backslash G/H]$ is $\times$-commutative then $N(H)/H$ must be commutative. 
 Under above assumptions we conjecture that $Gal(L/\Q)\cong N(H)/H$.
\subsection{Maps between Gelfand pair}\label{S:MapsbetweenGelfandpair}
A map $(H,G)\to (H', G')$ of Gelfand pairs is a homomorphism $\phi:G\to G'$ that maps $H$ to $H'$.
A map $\phi$ is a strong Hecke equivalence if it induces a bijection $\phi:G/H\to G'/H'$. We say that $\phi$ is a Hecke equivalence if it induces a bijection $H\backslash G/H\to H'\backslash G'/H'$. 
\begin{proposition}
Hecke equivalence $\phi:(H, G)\to (H', G')$ of finite Gelfand pairs implies strong Hecke equivalence.
\end{proposition}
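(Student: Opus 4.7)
The plan is to split the proof into two parts: first I show the induced map $\bar\phi:G/H\to G'/H'$ is injective, then that both sides have the same cardinality; together these imply $\bar\phi$ is a bijection.

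For injectivity the heart is to verify $\phi^{-1}(H')=H$. If $g\in G$ satisfies $\phi(g)\in H'$, then $\phi(HgH)=\phi(H)\phi(g)\phi(H)\subset H'\phi(g)H'=H'$, so under the double-coset bijection $\bar{\bar{\phi}}$ the class of $HgH$ is sent into the class of $H'$. Since $H$ already maps to $H'$, injectivity of $\bar{\bar{\phi}}$ forces $HgH=H$, hence $g\in H$. In particular $\ker\phi\subset H$, and $\bar\phi(g_1H)=\bar\phi(g_2H)$ means $\phi(g_1^{-1}g_2)\in H'$, hence $g_1^{-1}g_2\in H$, so $\bar\phi$ is injective.

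For equality of cardinalities I exploit the Gelfand-pair structure. Pullback of indicator functions gives a linear bijection $\phi^{*}:\C[H'\backslash G'/H']\to\C[H\backslash G/H]$ carrying $\{\gX'_i\}$ to $\{\gX_i\}$: applying the argument above to each double coset shows $\phi^{-1}(D'_i)=D_i$, so $\phi^{*}\gX'_i=\gX_i$. Hence $\phi^{*}$ is an isomorphism of $\cdot$-algebras sending $1'_\cdot$ to $1_\cdot$. The relation (\ref{Def:26}) gives $1_\cdot\times 1_\cdot=|X|\cdot 1_\cdot$ and symmetrically $1'_\cdot\times 1'_\cdot=|X'|\cdot 1'_\cdot$. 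The essential input is $\times$-compatibility of $\phi^{*}$, which is the content of Hecke equivalence as a morphism in the category $\mathcal{T}$ of Remark~\ref{R:catTdef}; using it,
\[
|X|\cdot 1_\cdot=1_\cdot\times 1_\cdot=\phi^{*}(1'_\cdot\times 1'_\cdot)=|X'|\cdot 1_\cdot,
\]
so $|X|=|X'|$. Combined with injectivity of $\bar\phi$ between finite sets of equal size, this forces surjectivity and finishes the proof.

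The main obstacle is the middle step: a bare bijection of double cosets does not a priori preserve the scalar $|X|$, so one must leverage the full algebraic compatibility of $\phi^{*}$ with both products that is intrinsic to the Gelfand-pair formulation in the category $\mathcal{T}$. Granting this compatibility, the argument reduces to the purely set-theoretic observation that an injection between finite sets of the same size is a bijection.
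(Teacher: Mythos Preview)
Your Part~1 (injectivity of $\bar\phi$) is correct and clean: the double-coset bijection forces $\phi^{-1}(H')=H$, hence $\bar\phi$ is injective. This part is actually sharper than what the paper writes down.

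The gap is in Part~2. You assume that $\phi^{*}$ intertwines the convolution $\times$, justifying this by saying Hecke equivalence is ``a morphism in the category $\mathcal{T}$''. But that is not the paper's definition: a Hecke equivalence is only a group homomorphism $\phi$ inducing a \emph{set-theoretic} bijection on double cosets. Nothing in the hypothesis guarantees $\times$-compatibility. In fact, a short computation shows
\[
(\phi^{*}f\times\phi^{*}g)(x,z)=\sum_{y'\in\bar\phi(X)}f(\bar\phi(x),y')\,g(y',\bar\phi(z)),
\qquad
\phi^{*}(f\times g)(x,z)=\sum_{y'\in X'}f(\bar\phi(x),y')\,g(y',\bar\phi(z)),
\]
so $\phi^{*}$ respects $\times$ precisely when $\bar\phi$ is already a bijection. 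Your argument is therefore circular: the ``essential input'' you invoke is equivalent to the conclusion.

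The paper closes this gap by a genuinely different, representation-theoretic route. It works with the $G$-map $\phi^{*}:\C[X']\to\C[X]$ and uses the Gelfand condition: the pulled-back spherical functions $\psi'_i\circ\bar\phi$ generate pairwise non-isomorphic irreducible $G$-subrepresentations (multiplicity one in $\C[X']$), which forces $\phi^{*}$ to be injective. Then the isomorphism $\C[X']^{H'}\cong\C[X]^{H}$ is used to show every spherical function of $(H,G)$ lies in the image of $\phi^{*}$, whence $\phi^{*}$ is surjective as well. Your injectivity argument could replace the second half of this, but you still need an independent reason for $|X|=|X'|$ (equivalently, for surjectivity of $\bar\phi$); the spherical-function/multiplicity-one argument supplies exactly that, and it genuinely uses the Gelfand hypothesis rather than an unproved $\times$-compatibility.
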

\begin{proof}
$\phi:G/H=:X\to X':=G'/H'$ induces a map of representations $\phi^*:\C[X']\to \C[X]$ via pullback.
By the assumption it defines an isomorphism of invariants 
\begin{equation}\label{E:weak}
\C[X']^{H'}\to \C[X]^{H}.
\end{equation}
By Proposition \ref{P:orthogonalL} spherical functions $\{\psi'_i\}$ 
 form a basis in $\C[X']^{H'}$. In addition the span of left shifts $\psi'_i(gx), x\in X'$ of $\psi'_i(x)$ coincides with the irreducible representation $T_i'$. The similar construction applied to $\psi'_i(\phi(x))$ generates an isomorphic subrepresentation in $\C[X]$. Representations $T_i'$ come with multiplicity one and generate $\C[X']$.
 From this we conclude that $\phi^*$ is injective. From isomorphism (\ref{E:weak}) we see that $\psi_j(x)=\sum a_{ij} \psi'_i(\phi(x))$ where $(a_{ij})$ is an invertible matrix. We know that spherical functions are orthogonal. This explains why $\psi_i(x)=\psi'_j(\phi(x))$ for some $j_i$.
\end{proof}
\begin{example}
Here is a diagram, created with a help of a computer, of Hecke equivalences between all the Gelfand pairs that produce matrices $(A,B,C)$ (\ref{E:SN}) with $n=7$:
 \[(S_4,PSL(3,\Z_2))\to (A_6,A_7)\to (S_6, S_7) \leftarrow (\Z_3,(\Z_7 \rtimes \Z_3) \rtimes \Z_2).\] 
Note that the graph is connected and all pairs are (strongly) Hecke equivalent to $(S_6, S_7)$.
\end{example}

\begin{example}
We used a computer to generate the diagram 
 of all Hecke equivalences of Gelfand pairs that correspond to matrix $C$ 
 
\[
\left(
\begin{array}{rrr}
 1 & 1 & 6 \\
 3 & 3 & -6 \\
 4 & -4 & 0 \\
\end{array}
\right).
\]
The matrix is a member of a bigger family matrixes related to $S_{n} \wr S_{k}$ ($n=4,k=2$)and discussed in Section \ref{S:wr}.
 We have a connected diagram of equivalences with the terminal group equal to $S_{4} \wr S_{2}$ (we omit precise formulas for inclusion $H\subset G$):
\begin{figure}[h]
\centering
 \begin{tikzpicture}[scale=1.0]
	\begin{pgfonlayer}{nodelayer}
		\node [label=90:$D$] (0) at (0, 3) {.};
		\node [label=90:$E$] (1) at (1, 3) {.};
		\node [label=90:$C$] (2) at (-1, 3) {.};
		\node [label=-90:$G$] (3) at (-1, 2) {.};
		\node [label=90:$B$] (4) at (-2, 3) {.};
		\node [label=90:$A$] (5) at (-3, 3) {.};
		\node [label=-90:$F$] (6) at (-2, 2) {.};
	\end{pgfonlayer}
	\begin{pgfonlayer}{edgelayer}
		\draw [->](5.center) to (4.center);
		\draw [->](4.center) to (2.center);
		\draw [->](2.center) to (0.center);
		\draw [->](0.center) to (1.center);
		\draw [->](6.center) to (4.center);
		\draw [->](3.center) to (2.center);
		\draw [->](6.center) to (3.center);
	\end{pgfonlayer}
\end{tikzpicture}
\end{figure}
\[\begin{split}
&A=A_4\subset \left(\left(\Z_2^{\times 4}\right) \rtimes \Z_3\right) \rtimes \Z_2\\
&B=S_4\subset (\left(\left(\Z_2^{\times 4}\right) \rtimes \Z_3\right) \rtimes \Z_2)\rtimes \Z_2\\
&C=(\Z_3 \times A_4) \rtimes \Z_2\subset (A_4^{\times 2} \rtimes \Z_2) \rtimes \Z_2\\
&D=(\Z_3 \times A_4) \rtimes \Z_2\subset (A_4^{\times 2}) \rtimes \Z_4\\
&E=S_4^{\times 2}\subset (S_4^{\times 2}) \rtimes\Z_2\\
&F=A_4\subset \left(\left(\Z_2^{\times 4}\right) \rtimes \Z_3\right) \rtimes \Z_2\\
&G=\Z_3\times A_4\subset (A_4^{\times 2}) \rtimes\Z_2\\
\end{split}\]
Here is the diagram of maps of the dual family of groups. The pair which corresponds to $S_{2} \wr S_{4}$ from Section \ref{S:wr} is $C'$:
\begin{figure}[h]
\centering
\begin{tikzpicture}
	\begin{pgfonlayer}{nodelayer}
		\node [label=90:$A'$] (0) at (-4, 3) {.};
		\node [label=90:$B'$] (4) at (-3, 3) {.};
		\node [label=90:$C'$] (3) at (-2, 3) {.};
		\node [label=0:$D'$] (2) at (-2, 2) {.};
		\node [label=90:$E'$] (1) at (-4, 1) {.};
		\node [label=90:$F'$] (5) at (-2, 1) {.};
	\end{pgfonlayer}
	\begin{pgfonlayer}{edgelayer}
		\draw [->] (4.center) to (3.center);
		\draw [->] (2.center) to (3.center);
		\draw [->] (0.center) to (4.center);
		\draw [->] (1.center) to (5.center);
	\end{pgfonlayer}
\end{tikzpicture}
\end{figure}
\[\begin{split}
&A'=S_3\subset \GL(2,\Z_3)\\
&B'=S_4\subset (((\Z_2 \times D_8) \rtimes \Z_2) \rtimes \Z_3) \rtimes \Z_2\\
&C'=\Z_2 \times S_4\subset \Z_2^4\rtimes S_4\\
&D'=\Z_2 \times A_4\subset \Z_2^4\rtimes A_4\\
&E'=A_4\subset ((\Z_2 \times D_8) \rtimes \Z_2) \rtimes \Z_3 \\
&F'=S_4 \subset (((\Z_2 \times D_8) \rtimes \Z_2) \rtimes \Z_3) \rtimes \Z_2 
\end{split}\]
We see that the diagram of Hecke equivalences for the dual family is disconnected. 
\end{example}
We come to the conclusion that the dual pair $(\check{H},\check{G})$ for $({H},{G})$, even it exists, is not necessarily unique even if study pairs up to Hecke equivalence.
\section{Examples}\label{S:Examples}
In this section we discuss some examples of the proposed duality. We will formulate a classical sufficient condition for $(H,G)$ to be a Gelfand pair.
\begin{proposition}\label{G:gelfcrit}
Let $\psi:G\to G$ be an involutive anti-automorphism such that $\psi$ leaves each double cosets $HgH$ invariant. Then $(H,G)$ is a Gelfand pair.
\end{proposition}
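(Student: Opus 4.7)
The plan is the classical Gelfand trick: lift the involutive anti-automorphism $\psi$ to an anti-involution of the convolution algebra $\C[G]$, observe that it acts as the identity on the subalgebra of $H$-bi-invariant functions, and conclude commutativity by reversing a product with itself.

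First I would define $\tilde\psi:\C[G]\to \C[G]$ by $\tilde\psi(f)(g):=f(\psi(g))$. Since $\psi$ is a bijection of $G$, $\tilde\psi$ is a linear automorphism of $\C[G]$, and because $\psi^2=\id$ it satisfies $\tilde\psi^2=\id$. The key compatibility with the convolution product
\[
(f_1\times f_2)(g)=\sum_{h\in G}f_1(h)f_2(h^{-1}g)
\]
is that $\tilde\psi$ is an \emph{anti}-homomorphism:
\[
\tilde\psi(f_1\times f_2)=\tilde\psi(f_2)\times\tilde\psi(f_1).
\]
This follows from the change of variables $h=\psi(h')$ combined with the identity $\psi(ab)=\psi(b)\psi(a)$, which gives $h^{-1}\psi(g)=\psi(gh'^{-1})$; after collecting the sum one recognizes the convolution in the opposite order. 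This is the single computational step in the argument and is entirely formal.

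Next I would use the double-coset hypothesis. A function $f\in\C[H\backslash G/H]$ is, by definition, constant on each double coset $HgH$. Since $\psi$ is assumed to permute the elements of each $HgH$ among themselves, the value $f(\psi(g))$ equals $f(g)$ for every $g$. Therefore $\tilde\psi$ restricts to the identity map on the subalgebra $\C[H\backslash G/H]\subset\C[G]$ (with respect to convolution: note that $H$-bi-invariant functions do form a convolution subalgebra, which is the standard fact used implicitly).

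Finally, for any $f_1,f_2\in\C[H\backslash G/H]$ the previous two steps combine to give
\[
f_1\times f_2=\tilde\psi(f_1\times f_2)=\tilde\psi(f_2)\times\tilde\psi(f_1)=f_2\times f_1,
\]
so the convolution on $\C[H\backslash G/H]$ is commutative, which is exactly the definition of a Gelfand pair recalled above. The only place one needs to be careful is verifying the anti-homomorphism property of $\tilde\psi$; once that is in hand the result is immediate, and I do not anticipate any real obstacle.
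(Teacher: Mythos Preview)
Your argument is the standard Gelfand trick and is correct as written; the one computation you flag (that $\tilde\psi$ is an anti-homomorphism for convolution) goes through exactly as you indicate via the substitution $h=\psi(h')$ and the identity $\psi(h')^{-1}\psi(g)=\psi(h'^{-1})\psi(g)=\psi(gh'^{-1})$. The paper itself does not supply a proof of this proposition---it is quoted as a classical sufficient condition and used as a black box in the subsequent examples---so there is nothing to compare against; your write-up simply fills in the omitted standard argument.
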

\subsection{$(S_{n-1}, S_n)$}\label{S:SN}
Let $S_n$ be a symmetric group on $n$ letters ($n\geq 2, S_{1}:=\{1\}$). The map $\sigma\to \sigma^{-1}$ leaves $S_{n-1}$ and $S_{n-1}\sigma_{n-1,n}S_{n-1}$ invariant. So, by Proposition \ref{G:gelfcrit} $(S_{n-1}, S_n)$ is a Gelfand pair. The set $X=S_n/S_{n-1}$ is isomorphic to $\{1,\dots,n\}$ with the natural $S_{n}$-action. $X$ is a union of two $S_{n-1}$-orbits $\{1,\dots,n-1\}\cup \{n\}$. The regular $S_n$ representation in $\C[X]$ is a direct sum of the trivial representation $\C$ and $n-1$-dimensional defining representation. 
We conclude that $A=n(1,n-1),B=(1,n-1)$ and $C$ is
\begin{equation}\label{E:SN}
L_n:=\left(
\begin{array}{rr}
1 & n-1 \\
 n-1 & -(n-1) \\
\end{array}
\right).
\end{equation}
The formulas for $A$ and $B$ follow from equations (\ref{E:dataA}) and (\ref{E:orthogonalityproduct}). The formulas for $A$ and $B$ do not support nontrivial involutions $\pi$ and $\mu$. The formulas for $C$ follows from Proposition \ref{E:cfirsteq} and Corollary \ref{C:corsumrowcol}.

The symmetry of the matrix $C$ implies that $(S_{n-1},S_n)$ is a self-dual pair.

\subsection{The group $S_{n} \wr S_{k}$}\label{S:wr}
A more complex example is the wreath product $G=S_{n} \wr S_{k}=S_{n}^{\times k}\rtimes S_k, n,k\geq 2$. It is a symmetry group of a set of pairs \[X=\{(i,j)| 1\leq i\leq n, 1\leq j\leq k\}.\] An element $g=(\sigma_1,\dots,\sigma_k)\rtimes \sigma\in S_{n}^{\times k}\rtimes S_k$ acts by the formula
\[g(i,j)=(\sigma_{l}(i), \sigma (j)), l=\sigma (j).\]
The stabilizer of $(1,1)$ is the subgroup $H=(S_{n-1}\times S_{n}^{\times (k-1)})\rtimes S_{k-1}$. 

The $H$-orbits of $H$ in $X$ are \[o_1\cup o_2\cup o_3:=\{(1,1)\}\cup \{(i,1)|2\leq i\leq n\}\cup \{(i,j)|1\leq i\leq n, 2\leq j\leq k\}.\] Their respective cardinalities are $1,n-1, n(k-1)$.

From this we conclude that $G=H\cup Hg_1H\cup Hg_2H$ where
\[g_1=(\sigma_{1,2}\times 1^{\times (k-1)})\rtimes 1,\quad g_2=(1^{\times (k)})\rtimes \sigma_{1,2}.\]
The anti-involution $g\to g^{-1}$ leaves all three double cosets invariant and by Proposition \ref{G:gelfcrit} we know that $(H,G)$ is a Gelfand pair.

The group $G$ is compatible with projection $p:X\to X_0=\{1\,\dots,k\}, p(i,j)=j$. $G$ action on the image of $p$ factors through $S_k$. The pullback along $p$ defines an inclusion $\C[X_0]\subset \C[X]$. $\C[X_0]$ is a direct sum of the trivial $\C$ and $k-1$ dimensional representation $T$. The orthogonal complement $Q$ to $\C[X_0]$ in $\C[X]$ has dimension $kn-(1+(k-1))=(n-1)k$. It must be an irreducible representation because $(H,G)$ is a Gelfand pair and $\dim \C[X\times X]^G=3$.

Let us compute function $\Psi_2$ corresponding to representation $T$. According to the formulas (\ref{E:psidef}) and (\ref{E:Psidef1})
this is done in terms of $H$-invariant normalized vector $\theta\in \left\{\sum_{i=1}^ka_ie_i|\sum_{i=1}^ka_i=0\right\}$. In this formula $\{e_i\}$ is the standard basis for $\C[X_0]$. We choose it to be
\[\theta=\frac{1}{\sqrt{k(k-1)}}\left((k-1)e_1-\sum_{i=2}^k e_i\right).\]
The function $(T(g)\theta,\theta), g\in S_k$ from (\ref{E:psidef}) has the following values:
\[(T(g)\theta,\theta)= 
\begin{cases}
1&\text{ if } g\in S_{k-1}\\
-\frac{1}{k-1} &\text{ if } g\in S_{k-1}\sigma_{12}S_{k-1}.
\end{cases}\]
From this we immediately compute the values of $\Psi_{2}(x,x_0)$, where $x_0=(1,1)$:
\[\Psi_{2}(x,x_0)= 
\begin{cases}
\frac{\dim T}{|X|}=\frac{k-1}{nk}&\text{ if } x=(i,1), 1\leq i\leq n\\
-\frac{\dim T}{|X|(k-1)}=-\frac{1}{nk} &\text{ if } x=(i,j), 1\leq i\leq n, 2\leq j\leq k.
\end{cases}\]
From this we derive the following formulas:
\[\begin{split}
&(\gX_i, \Psi_2)=|X|\sum_{x\in X}\gX_i(x,x_0)\overline{\Psi}_2(x,x_0)=\\
&=\begin{cases}
k-1&, i=1\\
(n-1)(k-1)&, i=2\\
-(k-1)n&, i=3.
\end{cases}
\end{split}\]
These numbers constitute the second column of the matrix $C$. We recover the third column from equation (\ref{E:csecondeq1}). Here is the final answer for $C$:

\begin{equation}\label{E:matrixM}
M_{n,k}=\left(
\begin{array}{rrr}
 1 & k-1 & k (n-1) \\
 n-1 & (k-1) (n-1) & -k (n-1) \\
 (k-1) n & (1-k) n & 0 \\
\end{array}
\right).
\end{equation}
As the double cosets are invariant with resect to $g\to g^{-1}$, we conclude that $\gX_i$ are invariant with respect to $\pi$ and $\mu$. $\pi$ and $\mu$ act trivially on $\Psi_i$ because all the irreducible sub-representations in $\C[X]$ are real. 

We see that $C^{\dagger}$ corresponds to the column-wise action of $S_{k} \wr S_{n}$ on the set $X$. It is remarkable that duality doesn't preserve cardinalities of the groups for $ |S_{n} \wr S_{k}|=n!^kk!\neq k!^nn!= |S_{k} \wr S_{n}|$.

\subsection{Semidirect product with an abelian group}\label{S:semidirect}
In this section we will find the dual pair for the pair $(H,A\rtimes H)$. In this pair groups $A$ and $H$ are finite. $A$ is an abelian group, equipped with an $H$-action. The embedding of $H$ is defined by $h\to (0,h)$. 

There is an anti-involution $\nu$ on $G$ defined by the formula $(a,h)\to (h^{-1}(a),h^{-1})$. It is the composition of the inverse and the involution $(a,h)\to (-a,h)$. $\nu$ preserves double cosets $HaH$. By Proposition \ref{G:gelfcrit} $(H,A\rtimes H)$ is a Gelfand pair. Note that the homogeneous space $X$ coincides with $A$. The group $H$ is acting on $X=A$ according to homomorphism $\phi:H\to Aut(A)$, which defines the semi-direct product. The group $A$ acts on $X$ by translations.

We use adjoint to $\phi$ to define the semidirect product $\check{A}\rtimes H$, where $\check{A}$ is the Pontryagin dual $\Hom(A,\C^{\times})$.

In this section we show that $(H,\check{A}\rtimes H)$ is dual to $(H,A\rtimes H)$. Note that even though $\check{A}\rtimes H$ and $A\rtimes H$ have the same cardinalities in general the groups are not isomorphic. The simplest example is $H=\mathbb{F}_p^{\times}$ acting on $A=\mathbb{F}_p$-additive group of the finite field. 

The $H$ orbits in $X$ are the orbits of $H$-action on $A$. To simplify notations we fix a representative $a_i\in o_i\subset A$, $\chi_j\in o_j\subset \check{A}$ in each $H$-orbit.
This way 
\begin{equation}\label{E:union}
A=\bigsqcup_{i=1}^ko(a_i),\quad \check{A}=\bigsqcup_{i=1}^ko(\chi_i).
\end{equation}

An $o(\chi_i)$-graded representation $T$ of the group $G$ is a representation that decomposes into the direct sum of linear spaces labelled by characters from a fixed $G$-orbit:
\[T_{o(\chi_i)}=\bigoplus_{\chi\in o(\chi_i)} T_{\chi}.\]
The group $G$ acts on $T$ by isomorphisms $g:T_{\chi}\to T_{g\chi}$. The groups $A$ acts on $T_{\chi}$ according to the character $\chi$.

\begin{proposition}\label{P:decomposition}
\begin{enumerate}
\item Any representation $T$ of $G$ has a decomposition 
\begin{equation}\label{E:decomposition}
T=\bigoplus_{i=1}^k T_{o(\chi_i)}.
\end{equation} 
\item $T_{o(\chi_i)}$ is $G$-irreducible iff the graded component ${T_{o(\chi_i)}}_{\chi_i}$ is an irreducible representation of $St(\chi_i)$.
\item $\C[X]$ splits into the direct sums (\ref{E:decomposition}) where $T_{o(\chi_i)}\neq \{0\}$ for each $i$. Moreover dimensions of $o(\chi_i)$-graded components $T_{\chi, o(\chi_i)}$ are equal to one.
\end{enumerate}
\end{proposition}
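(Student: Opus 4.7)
The plan is to apply a standard Fourier/Mackey-style decomposition of representations of the semi-direct product $G=A\rtimes H$, leveraging that $A$ is finite abelian and normal. I would organize the argument as three steps, one per part of the statement.

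First, for part (1), I restrict an arbitrary $G$-representation $T$ to $A$. Being finite abelian, $A$ forces $T$ to decompose into character spaces $T=\bigoplus_{\chi\in\check{A}} T_{\chi}$, where $T_{\chi}=\{v\in T : a\cdot v=\chi(a)v\ \forall a\in A\}$. A short calculation, $a\cdot(h\cdot v)=h\cdot(h^{-1}ah)\cdot v=\chi(\phi(h)^{-1}(a))\,h\cdot v$, shows $h\cdot T_{\chi}\subseteq T_{h\cdot\chi}$ for the adjoint $H$-action $h\cdot\chi:=\chi\circ\phi(h)^{-1}$ on $\check{A}$. Grouping the $\chi$-components along the orbit partition (\ref{E:union}) yields the claimed $G$-stable decomposition $T=\bigoplus_{i=1}^{k} T_{o(\chi_i)}$ with $T_{o(\chi_i)}:=\bigoplus_{\chi\in o(\chi_i)}T_{\chi}$.

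For part (2), the key observation is that inside $T_{o(\chi_i)}$ the group $H$ transitively permutes the pieces $\{T_{\chi}\}_{\chi\in o(\chi_i)}$. Any nonzero $G$-subrepresentation $S\subseteq T_{o(\chi_i)}$, after translation by a suitable $h\in H$, meets $T_{\chi_i}$ nontrivially, and the intersection is both $A$-stable and $St(\chi_i)$-stable, hence is an $St(\chi_i)$-subrepresentation of $T_{\chi_i}$. If $T_{\chi_i}$ is $St(\chi_i)$-irreducible, this intersection equals $T_{\chi_i}$, and translation by $H$ recovers $S=T_{o(\chi_i)}$. Conversely, a proper $St(\chi_i)$-subrepresentation $W\subsetneq T_{\chi_i}$ yields the proper $G$-subrepresentation $\sum_{h\in H}h\cdot W\subsetneq T_{o(\chi_i)}$, proving the equivalence.

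For part (3), Fourier inversion identifies $\C[X]=\C[A]$ as an $A$-module with $\bigoplus_{\chi\in\check{A}}\C\cdot\tilde{\chi}$, $\tilde{\chi}(a):=\chi(a)$, so each isotypic component $T_{\chi}=\C\tilde{\chi}$ is one-dimensional; this settles the dimension assertion and the nonvanishing of every block $T_{o(\chi_i)}$. Because a one-dimensional $St(\chi_i)$-representation is automatically irreducible, part (2) upgrades this immediately to $G$-irreducibility of each $T_{o(\chi_i)}$. I expect the only real snag to be bookkeeping: ensuring the adjoint action of $H$ on $\check{A}$ carries the correct direction of $\phi$ and that the $St(\chi_i)$-action on $T_{\chi_i}$ used in (2) is exactly the one inherited from $G$, without any stray twist. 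Once those identifications are pinned down, the rest is routine linear algebra.
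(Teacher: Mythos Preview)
Your proposal is correct and follows essentially the same route as the paper: decompose $T|_A$ into character spaces, group them along $H$-orbits in $\check{A}$ for part (1), run a Clifford-type argument comparing $G$-subrepresentations of $T_{o(\chi_i)}$ with $St(\chi_i)$-subrepresentations of $T_{\chi_i}$ for part (2), and invoke Fourier analysis on $A$ to see that each $\C[A]_\chi$ is one-dimensional for part (3). Your write-up is in fact a bit more explicit than the paper's (e.g.\ the computation $a\cdot(h\cdot v)=h\cdot(h^{-1}ah)\cdot v$ and the verification that $\sum_{h}h\cdot W$ is proper), but there is no substantive difference in strategy.
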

\begin{proof}
 $T$ decomposes into $A$-isotypic components by the characters $\chi\in \check{A}$. The group $G$ shuffles these components. The characters can be grouped according to orbits $o(\chi_i)$. By construction, $\bigoplus_{\chi\in o(\chi_i)} T_{\chi}$ is a subrepresentation in $T$. The proof follows from (\ref{E:union}).
 
 If $T'_{\chi_i}\subset T_{\chi_i}$ is a proper $St(\chi_i)$-subrepresentation, then $\sum_{g\in G}gT'_{\chi_i}$ is a $G$ subrepresentation of $T_{o(\chi_i)}$. Hence $T_{o(\chi_i)}$ is not irreducible. Conversely, any irreducible $G$- subrepresentation $P$ of $T_{o(\chi_i)}$ is generated by $P_{\chi_i}\subset T_{\chi_i}$.
 
 As a representation of the group $A$, $\C[X]$ is isomorphic to $ \C[A]$. From the Fourier analysis on $A$, we know that its $A$-isotypic components are one-dimensional. Hence $\dim \C[X]_{\chi}=1$ for all $\chi$. The second statement follows from this. 
\end{proof}

Our next goal is to find a formula for the functions $\Phi_i$. To do this we have to find a $G$-invariant vector $\theta\in T_{o(\chi_i)}$.

\begin{proposition}\label{P:thetadef}
\begin{equation}\label{E:thetadef}
 \theta(a)=\sum_{\chi\in o(\chi_i)} \chi(a)
 \end{equation}
 is an $H$-invariant function on $X$.
\end{proposition}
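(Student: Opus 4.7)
The plan is a direct unwinding of the definitions: $H$-invariance of $\theta$ reduces to the fact that $H$ permutes the characters in any one of its orbits.

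First I would fix conventions. The $H$-action on $\check{A} = \Hom(A,\C^\times)$ used to form $\check{A}\rtimes H$ is the contragredient of $\phi$, so $(h\cdot \chi)(a) = \chi(h^{-1}(a))$ for $h\in H$, $a\in A$, $\chi\in \check{A}$. Under this action, each orbit $o(\chi_i)\subset \check{A}$ is by definition $H$-stable, so for every $h\in H$ the assignment $\chi\mapsto h^{-1}\cdot\chi$ is a bijection of $o(\chi_i)$ with itself.

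Next I would compute $\theta(h(a))$ for an arbitrary $h\in H$, using the identification $X = A$ and the induced $H$-action on $X$ by $a\mapsto h(a)$:
\begin{equation*}
\theta(h(a)) \;=\; \sum_{\chi\in o(\chi_i)} \chi(h(a)) \;=\; \sum_{\chi\in o(\chi_i)} (h^{-1}\cdot\chi)(a).
\end{equation*}
Reindexing the sum via the bijection $\chi\mapsto h^{-1}\cdot\chi$ of $o(\chi_i)$ yields
\begin{equation*}
\theta(h(a)) \;=\; \sum_{\chi'\in o(\chi_i)} \chi'(a) \;=\; \theta(a),
\end{equation*}
which is exactly the desired $H$-invariance.

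There is no real obstacle here; the only thing worth double-checking is the bookkeeping between the $H$-action on $A$ (via $\phi$) and the dual action on $\check{A}$ (via the adjoint), since a wrong convention would break the reindexing step. One might also record, as a consequence that will be needed in the sequel, that $\theta$ is nonzero (it takes the value $|o(\chi_i)|$ at $a=0$), so that it furnishes a genuine $H$-invariant vector in the component $T_{o(\chi_i)}$ of Proposition \ref{P:decomposition}.
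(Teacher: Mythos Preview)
Your proof is correct and uses the same core computation as the paper: both rely on the fact that $h\in H$ permutes the characters in the orbit $o(\chi_i)$, so the sum over $\chi\in o(\chi_i)$ is unchanged under the substitution $\chi\mapsto h^{-1}\cdot\chi$. The paper organizes this slightly differently---it starts from a general element $\theta=\sum_\chi \theta_\chi\,\chi$ of $T_{o(\chi_i)}$, imposes $H$-invariance, and deduces $\theta_{g^{-1}\chi}=\theta_\chi$ (hence all coefficients equal)---but this is the same reindexing argument read in the opposite direction.
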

\begin{proof}
Let $ \theta(a)=\sum_{\chi\in o(\chi_i)} \theta_{\chi}\chi(a)\in T_{o(\chi_i)}\subset \C[X]$ be an $H$-invariant function. 
$\theta$ satisfies 
\[\theta((0,g^{-1})a)=\sum_{\chi\in o(\chi_i)} \theta_{\chi}\chi(g^{-1}a))=\sum_{\chi\in o(\chi_i)} \theta_{\chi}\chi(a)=\theta(a).\]

It implies that $\theta_{g^{-1}\chi}=\theta_{\chi}$.
\end{proof}

The function 
\[\theta_{o(\chi_i)}(a)=\frac{1}{\sqrt{|\check{A}||o(\chi_i)|}}\sum_{\chi\in o(\chi_i)} \chi(a)\]
represents an $H$-invariant unit vector in $T_{o(\chi_i)}$. After a little algebra we find that
\[(\theta_i,T_i(l,g)\theta_i)=\frac{1}{|o(\chi_i)|}\sum_{\chi\in O(\chi_i)} \chi(l),\quad \psi_{i}(l,g)=\frac{1}{|A|}\sum_{\chi\in o(\chi_i)}\chi(l)\]

The triple $(A,B,C)$ in our case is
\begin{equation}\label{E:dataX}
\begin{split}
&(\gX_i,{\gX}_j)
=\delta_{ij}|A| |o(a_i)|= \delta_{ij}A_i, \\
& (\Psi_i,{\Psi}_j)
=\delta_{ij}|o(\chi_i)|=\delta_{ij}B_i, \\
&(\gX_i,{\Psi}_j)
=\sum_{\substack{\chi\in o(\chi_j),\\a\in o(a_i)}} \chi^{-1}(a)=C_{ij}.
\end{split}
\end{equation}

Equations (\ref{E:dataX}) make it obvious that a possible dual pair to $H\subset A\rtimes H$ is $H\subset \check{A}\rtimes H$.

\subsection{Non examples}\label{S:Nonexample}
There are some classical examples of Gelfand pairs that do not have any dual counterparts. One set of such non examples is formed by $S_k\times S_{n-k}\subset S_{n}$ (See \cite{MWildon} for classification of multiplicity free subgroups in $S_n$.) Computations with GAP4 (we omit details) show that $C_n$ derived from $S_2\times S_{n-2}\subset S_{n}$ is
\[C_n=\left(
\begin{array}{rrr}
 1 & n+1 & \frac{1}{2} (n-1) (n+2) \\
 2 n & (n-2) (n+1) & -(n-1) (n+2) \\
 \frac{1}{2} (n-1) n & -(n-1) (n+1) & \frac{1}{2} (n-1) (n+2) \\
\end{array}
\right).\]
$C_n$
 fails integrality test (\ref{E:integrality4}) for $n\geq 3$. 
 
 Gelfand pairs $(G,G\times G)$ described in Remark \ref{R:character} almost never pass integrality tests (\ref{E:integrality2},\ref{E:integrality4}). Rare exceptions from the above rule come  some solvable $G$. The nature of these groups deserve a closer study.

\subsection{On classification of dual pairs}\label{S:Onclassification}
In this section we present our attempt to compile a list of matrices $C$ that come from dual Gelfand pairs. The computation of matrix $C$, if done by hand, could be very time-consuming. This is why we used the GAP4 system to accelerate the calculations. By (\ref{E:identity0}) duality doesn't change $|X|$. GAP4 has a library of all transitive effective actions on sets with cardinality $\leq 30$. Given a group and a transitive action on $X$ we recover $H$ without troubles, as a stabilizer of a point. This way we can systematically search for dual pairs. The data that must be present in an ideal list might take some space. 
In such a list each matrix $C$ should be accompanied by the description of the generators of $G\subset S_{|X|}$. The list should also contain a description of all Hecke equivalences $\phi:G\to G'$ in the case $C$ happen to correspond to more then one pair. It is clear that there is a room in the list for more interesting details. To save space I decided to drastically  limit myself with a description only of $C$ and with a record of cardinality $r(C)$ of the set $\{(H,G)|C(H,G)=C\}$ of the isomorphism classes of pairs. I will also omit matrices that break into nontrivial tensor products.

Let us fix the notations used in the table. Matrix $L_n$ is defined in (\ref{E:SN}). Matrix $E_n$ is $(\zeta_n^{ij}), i,j=0,\dots n-1$. $\zeta_n$ is a primitive root of unity of order $n$. Matrix $M_{n,k}$ is defined in (\ref{E:matrixM}). In the table below we will be writing $r(C)$ in the exponent of $C$. Double entries in the second column correspond to dual pairs. Single entries correspond to self-dual pairs:
\[\begin{array}{|c|c|}
\hline
|X| & C\quad Out_{H}(G),Gal(L/\Q) \\
\hline
2&\begin{array}{c}L_2^1\quad \{1\},\{1\}\end{array}\\
\hline
3&\begin{array}{c}L_3^1\quad \{1\},\{1\}\\ E_3^1\quad \Z_2,\Z_2\\ \end{array}\\
\hline
4&\begin{array}{c}L_4^2\quad \{1\},\{1\} \\ M_{2,2}^1\quad \{1\},\{1\}\\ E_4^1 \quad \Z_2,\Z_2 \\ \end{array}\\
\hline
5&\begin{array}{c}L_5^3\quad \{1\},\{1\}\\ \scalebox{0.5}{$ \left(
\begin{array}{rrr}
 1 & 2 & 2 \\
 2 & 2 \zeta_5 ^3+2 \zeta_5 ^2 & 2 \zeta_5 ^4+2 \zeta_5 \\
 2 & 2 \zeta_5 ^4+2 \zeta_5 & 2 \zeta_5 ^3+2 \zeta_5 ^2 \\
\end{array}
\right)$}^1\quad \Z_2,\Z_2
\\ E_5^1\quad \Z_4,\Z_4\\ \end{array}\\
\hline
6&\begin{array}{c}L_6^4\quad \{1\},\{1\}\\ 
\begin{array}{c|c}M_{2,3}^3\quad \{1\},\{1\} &M_{3,2}^3\quad \{1\},\{1\}\end{array}\\
\begin{array}{c|c}\scalebox{0.5}{$\left(
\begin{array}{rrrr}
 1 & 1 & 1 & 3 \\
 1 & 1 & 1 & -3 \\
 2 & 2 \zeta _3 & 2 \zeta _3^2 & 0 \\
 2 & 2 \zeta _3^2 & 2 \zeta _3 & 0 \\
\end{array}
\right)$}^2
\quad \Z_2,\Z_2&\scalebox{0.5}{$\left(
\begin{array}{rrrr}
 1 & 1 & 2 & 2 \\
 1 & 1 & 2 \zeta _3^2 & 2 \zeta _3 \\
 1 & 1 & 2 \zeta _3 & 2 \zeta _3^2 \\
 3 & -3 & 0 & 0 \\
\end{array}
\right)$}^1\quad \Z_2,\Z_2\end{array}
 \end{array}\\
\hline
7&\begin{array}{c}L_7^4\quad \{1\},\{1\}\\ 
\scalebox{0.5}{$ \left(
\begin{array}{rrr}
 1 & 3 & 3 \\
 3 & 3 \zeta _7^6+3 \zeta _7^5+3 \zeta _7^3 & 3 \zeta _7^4+3 \zeta _7^2+3 \zeta _7 \\
 3 & 3 \zeta _7^4+3 \zeta _7^2+3 \zeta _7 & 3 \zeta _7^6+3 \zeta _7^5+3 \zeta _7^3 \\
\end{array}
\right)$}^1*\quad \Z_2,\Z_2\\ 
\scalebox{0.5}{$ \left(
\begin{array}{rrrr}
 1 & 2 & 2 & 2 \\
 2 & 2 \zeta _7^4+2 \zeta _7^3 & 2 \zeta _7^5+2 \zeta _7^2 & 2 \zeta _7^6+2 \zeta _7 \\
 2 & 2 \zeta _7^6+2 \zeta _7 & 2 \zeta _7^4+2 \zeta _7^3 & 2 \zeta _7^5+2 \zeta _7^2 \\
 2 & 2 \zeta _7^5+2 \zeta _7^2 & 2 \zeta _7^6+2 \zeta _7 & 2 \zeta _7^4+2 \zeta _7^3 \\
\end{array}
\right)$}^1\quad \Z_3,\Z_3\\
E_7^1 \quad \Z_6,\Z_6
\\ \end{array}\\
\hline
\cdots&\cdots
\end{array}
\]

The matrix marked by astérisque $*$ corresponds to a self-dual pair with a nontrivial self-duality isomorphism (\ref{D:selfduality}).
The number of entries in the table grows fast with $|X|$ (for $|X|=12$ this number is about $30$) and proposed presentation of data becomes impractical.

In the adopted notation matrix $C$ corresponding to $(S_3,S_3\times S_3)$, from example (\ref{P:illustration}), is $M_{3,2}$. The matrix $M_{2,3}$ corresponds to $(\Z_4,S_4)$.
\subsection{Some sporadic examples}
Mathieu group $G=M_{11}$, the smallest in the sporadic family,  acts transitively on the $22$ objects (see \cite{Conway} p.221 for details). In this case $H$ is the alternating group $A_6$. The $C$ matrix is
\[\left(
\begin{array}{rrr}
 1 & 10 & 11 \\
 1 & 10 & -11 \\
 20 & -20 & 0 \\
\end{array}
\right).\]
The transposed matrix corresponds to the group $\check{G}=(M_{11} \times M_{11}) \rtimes \Z_2$ with the subgroup $\check{H}$ isomorphic to $M_{11} \times S_6$. The action of $\check{G}$ comes from the action of $M_{11}$ on $\Omega, |\Omega|=11$. Let us take two copies of $\Omega$. Then $M_{11} \times M_{11}$ acts on $\Omega\coprod \Omega'$. The generator $a\in \Z_2$ acts by $\omega_i\to \omega'_i$. Matrix $C$ doesn't support nontrivial  $\pi$ and $\mu$.

There is a self-dual example with $C$ equal to 
\[\left(
\begin{array}{rrrrr}
 1 & 11 & 11 & 55 & 66 \\
 11 & 121 & -11 & -55 & -66 \\
 11 & -11 & 121 & -55 & -66 \\
 55 & -55 & -55 & 385 & -330 \\
 66 & -66 & -66 & -330 & 396 \\
\end{array}
\right).\]
It is related to the exceptional group $G=M_{12}$ and a subgroup $H=\mathrm{PSL}_2(\Z_{11})\subset M_{11}\subset M_{12}$. Involutions $\pi$ and $\mu$ also act trivially. For more on multiplicity free subgroups in sporadic groups see \cite{BreuerLux}.
\section{Some open questions}\label{S:concluding}
In this section I collected several questions related to the subject of the paper that are worth of a closer look. 

\subsection{On the relation to other dualities}The first question that needs to be addressed is rather vague: what is the relation of the proposed duality to other dualities known in mathematics: Mirror symmetry and Langlands duality. Though, the constructions in this paper were partly motivated by Satake isomorphism, the relation to main body of Langlands conjectures is not clear. The spaces $L(N,M)$ (or better $L(N,N',M)=\left\{\gamma:[0,1]\to M|\gamma(0)\in N,\gamma(1)\in N'\right\}$) where $N,N'$ are Lagrangian submanifold in symplectic $M$, play an important role in the definition of Floer homology and Fukaya category. This might give an indication  of a possible relation of proposed duality to Mirror symmetry.
\subsection{Duality for Gelfand pairs of Lie groups and locally compact groups}
In order to define a Gelfand pair in the context of differential geometry we consider the algebra of differential operators $Diff(X)$ on the homogeneous space $X=G/H$ (see \cite{VinbergCom} for more details). The algebra $Diff(X)^G$ in general is noncommutative. When it is commutative $H,G$ is by definition a Gelfand pair. $Diff(X)^G$ is analogous to $(\C[X\times X]^G,\times)$. A possible candidate for the role of $(\C[X\times X]^G,\cdot)$ is the space of $G$-invariant functions on the cotangent bundle $T^*X$ with point-wise multiplication. It is not clear if $Diff(X)^G$ and $C^{\infty}(T^*X)$ support any traces, which are necessary to put them in our framework. Modulo these details one can ask a question of constructing a dual pair $\check{H},\check{G}$, where the role $Diff(X)^G$ and $C^{\infty}(T^*X)$ is interchanged. It is not clear at present if this duality can be illustrated by any interesting examples. The simplest question that waits its answer is what is the dual pair for $\mathrm{S}^1\subset \mathrm{SU}_2$ and whether  the dual pair exists at all. Perhaps, we have to extend the area of search to include locally compact dual pairs.

The fundamental group, which was used in Example \ref{Ex:orb}, is a part of a package that contains also the higher homotopy groups. The higher homotopy groups form a graded Lie algebra with respect to Whitehead product. In fact the theory of rational homotopy type of Sullivan tells us that the the homotopy category of rational differential graded Lie algebras with
nilpotent finite type homology is equivalent to the rational homotopy category of nilpotent topological spaces with finite type rational homology. Would it be possible to combine this equivalence and the Lie algebra technique used in the differential geometry to construct interesting examples of topological Gelfand pairs?

\subsection{On the categorification} Objects of the category $Vect[X\times X]^G$ \cite{EtingofGelakiNikshychOstrik} are finite-dimensional linear spaces $V_{x,y}, x,y\in X$ together with a finite group action. $R(g):V_{x,y}\to V_{gx,gy},g\in G, X$. There is a map from $K$-functor of $Vect[X\times X]^G$ to $\C[X\times X]^G V_{x,y}\to \dim V_{x,y}$. $Vect[X\times X]^G$ has two tensor products: 
\[(V\otimes W)_{x,y}=V_{x,y}\otimes W_{x,y}\quad (V\boxtimes W)_{x,y}=\bigoplus_z V_{x,z}\otimes W_{z,y}.\] Involutions act by $\Pi(V)_{x,y}=\overline{V}_{x,y}, \mathrm{M}(V)_{x,y}=\overline{V}_{y,x}$. If $H,G$ is a Gelfand pair it is possible to choose a tensor subcategory $F[X\times X]^G$ in $Vect[X\times X]^G$ in such a way that $\dim: K(F[X\times X]^G)\otimes \C \to \C[X\times X]^G $ is an isomorphism. Is it true that $Vect[X\times X]^G$, or, perhaps, better $F[X\times X]^G$, is a symmetric monoidal category? What is the formula for commutativity morphisms? Is it possible to categorify duality? Naively, categorification should take a form of a tensor functor
$D:F[X\times X]^G\to F[\check{X}\times \check{X}]^{\check{G}}$,
which interchanges tensor structures and involutions $\Pi$ and $\mathrm{M}$. On the level of $K$-functors it should define an isomorphism $\C[X\times X]^G \to \C[\check{X}\times \check{X}]^{\check{G}}$ that interchanges the algebra structures. The problem has no solution the way it is formulated. The simplest counterexample is $G=\Z_5,H=\{1\}$. We know that vector spaces give us categorification of integers. In order to accommodate entries of matrix $C$ in the categorical language we have to categorify the ring of integers of the cyclotomic field and use it to properly modify $F[X\times X]^G$. Still, even in the case that $C$ has integral coefficients (e.g. in case of $S_{n-1},S_n$) categorification is unknown.
Such a categorification, if it exist, would resemble a construction from \cite{MirkoviandVilonen}. Note that formulas (\ref{E:psidef1}) bases $\{\gX_i\}$ and $\{\Phi_j\}$ have analogues in the Geometric Langlands theory. Element $\gX_i$ is similar to the image in the $K$-theory of a skyscraper sheaf with support on the strata of the affine Grassmannian. $\Phi_j$ is analogous to the image of a perverse sheaf. The main distinction from the Geometric Langlands theory is that our matrix $C$ is never upper-triangular.

\subsection{On the sufficient conditions for existence of the dual pair} The author is not aware of any such a condition except a condition to be  a finite abelian group. Experimental data (about hundred examples) doesn't contradict the following conjecture: If the matrix $C$ corresponding to Gelfand pair $H,G$ satisfying (\ref{E:integrality2}) and (\ref{E:integrality4}) then there is a dual pair $\check{H},\check{G}$. 

\subsection{On the String Topology}
Not too many examples of topological Gelfand pairs discussed in the introduction are known. The most basic is $\{pt\}\subset S^{2n+1}$. $\mathbb{H}_{*}(N,M), \mathbb{H}^{*}(N,M)$ in this case are polynomial algebras in one variable of degree $2n$. Simply-connected compact Lie groups give another class of examples. Again, $N=\{1\}$ is one-point set. A product of such manifolds, we denote it by $P$, can be interpreted as a topological analogue of a finite abelian group (it is also a self-dual object). Let us consider a fiber bundle $M$ with a base $N$ and a fiber $P$. Let us also assume that the bundle has a section $\sigma:N\to M$. This structure is analogous to a semidirect product $A\rtimes H$ from Section \ref{S:semidirect}. We conjecture that $(N,M)$ is a topological Gelfand pair. Under what conditions it is self-dual?

The author is planning to address these questions in the following publications.


\begin{thebibliography}{10}

\bibitem{BreuerLux}
T.~Breuer and K.~Lux.
\newblock The multiplicity-free permutation characters of the sporadic simple
  groups and their automorphism groups.
\newblock {\em Communications in Algebra}, 24(7):2293--2316, 1996.

\bibitem{RonaldBrown}
R.~Brown.
\newblock {\em Topology and Groupoids}.
\newblock BookSurge Publishing, 2006.

\bibitem{ChasSullivan}
M.~Chas and D.~P. Sullivan.
\newblock String topology.
\newblock arXiv.org/abs/math/9911159.

\bibitem{Conway}
J.H. Conway.
\newblock Three lectures on exceptional groups.
\newblock In M.B. Powell and G.~Higman, editors, {\em Finite simple groups},
  pages 215--247. Academic Press, 1971.

\bibitem{CurtisReiner}
C.~W. Curtis and I.~Reiner.
\newblock {\em Representation Theory Of Finite Groups And Associative
  Algebras}.
\newblock Interscience Publishers, 1962.

\bibitem{Egge}
E.~S. Egge.
\newblock A generalization of the \uppercase{T}erwilliger \uppercase{A}lgebra.
\newblock {\em Journal of Algebra}, 233(1):213--252, 1 November 2000.

\bibitem{EtingofGelakiNikshychOstrik}
P.~Etingof, S.~Gelaki, D.~Nikshych, and V.~Ostrik.
\newblock {\em Tensor Categories}, volume 205 of {\em Mathematical Surveys and
  Monographs}.
\newblock AMS, 2015.

\bibitem{FrenkelLanglands}
E.~Frenkel.
\newblock Lectures on the \uppercase{L}anglands \uppercase{P}rogram and
  \uppercase{C}onformal \uppercase{F}ield \uppercase{T}heory.
\newblock In P.~Cartier, P.~Moussa, B.~Julia, and P.~Vanhove, editors, {\em
  Frontiers in Number Theory, Physics, and Geometry II}, pages 387--533.
  Springer, Berlin, Heidelberg, 2007.

\bibitem{HingstonandOancea}
N.~Hingston and A.~Oancea.
\newblock The space of paths in complex projective space with real boundary
  conditions.
\newblock arXiv:1311.7292 [math.GT].

\bibitem{Kirillov}
A.~A. Kirillov.
\newblock {\em Representation Theory and Noncommutative Harmonic Analysis I},
  volume~22 of {\em Encyclopaedia of Mathematical Sciences}.
\newblock Springer-Verlag, Berlin Heidelberg GmbH, 1994.

\bibitem{LupercioUribeaXicotencatl}
E.~Lupercio, B.~Uribe, and M.~A Xicotencatl.
\newblock Orbifold string topology.
\newblock {\em Geometry and Topology}, 12:2203--2247, 2008.

\bibitem{MirkoviandVilonen}
I.~Mirkovi{\'c} and K.~Vilonen.
\newblock Geometric \uppercase{L}anglands duality and representations of
  algebraic groups over commutative rings.
\newblock {\em Annals Of Mathematics}, 166(1):95--143, 2007.

\bibitem{Neukirch}
J.~Neukirch.
\newblock {\em Algebraic Number Theory}.
\newblock Springer, 1999.

\bibitem{Pontryagin}
L.D. Pontryagin.
\newblock {\em Topological groups}.
\newblock Princeton Univ. Press, 1958.

\bibitem{Sullivan2}
D.~Sullivan.
\newblock Open and closed string field theory interpreted in classical
  algebraic topology.
\newblock In {\em Topology, geometry and quantum field theory}, volume 308 of
  {\em London Math. Soc. Lecture Note Ser.}, pages 344--357. Cambridge Univ.
  Press, Cambridge, 2004.

\bibitem{VinbergCom}
E.B. Vinberg.
\newblock Commutative homogeneous spaces and coisotropic actions.
\newblock {\em UMN}, 56(1):3--62, 2001.

\bibitem{MWildon}
M.~Wildon.
\newblock Multiplicity-free representations of symmetric groups.
\newblock {\em Journal of Pure and Applied Algebra}, 213:1464--1477, 2009.

\end{thebibliography}

\end{document}